\newtheorem{lemma}{Lemma}
\newtheorem{corollary}{Corollary}
\newtheorem{theorem}{Theorem}
\newtheorem{proposition}{Proposition}
\newtheorem{fact}{Fact}
\newtheorem{remark}{Remark}
\newcommand{\NN}{\mathbb{N}}
\newcommand{\QQ}{\mathbb{Q}}
\newcommand{\ZZ}{\mathbb{Z}}
\newcommand{\RR}{\mathbb{R}}
\newcommand{\CCC}{\mathbb{C}}
\newcommand{\BB}{\mathfrak{B}}
\newcommand{\XX}{\mathfrak{X}}
\newcommand{\SSS}{\mathfrak{S}}
\newcommand{\TT}{\mathfrak{T}}
\newcommand{\YY}{\mathfrak{Y}}
\newcommand{\ZZZ}{\mathfrak{Z}}
\newcommand{\cc}{\mathbf{c}}
\newcommand{\CC}{\mathbf{C}}
\newcommand{\AAA}{\mathcal{A}}
\newcommand{\MM}{\mathcal{M}}
\newcommand{\HH}{\mathcal{H}}
\newcommand{\SSSS}{\mathcal{S}}
\newcommand{\TTTT}{\mathcal{T}}
\tikzset{dots/.append style={ultra thick, fill=none}}
\begin{document}
	
	\title[Maximal operators on Lorentz spaces]{Boundedness properties \\ of maximal operators on Lorentz spaces}
	
	\author{Dariusz Kosz}
	\address{ \newline Dariusz Kosz
		\newline Faculty of Pure and Applied Mathematics
		\newline Wroc\l aw University of Science and Technology 
		\newline Wybrze\.ze Wyspia\'nskiego 27, 50-370 Wroc\l aw, Poland
		\newline \textit{E-mail address:} \textnormal{dariusz.kosz@pwr.edu.pl}	
	}
	
	\thanks{This paper constitutes a part of PhD Thesis written in the Faculty of Pure and Applied Mathematics, Wroc\l aw University of Science and Technology, under the supervision of Professor Krzysztof Stempak.	Research was supported by the National Science Centre of Poland, project no. 2016/21/N/ST1/01496.
	}

	\begin{abstract} We study mapping properties of the centered Hardy--Littlewood maximal operator $\MM$ acting on Lorentz spaces. Given $p \in (1,\infty)$ and a metric measure space $\XX$ we let $\Omega^p_{\rm HL}(\XX) \subset [0,1]^2$ be the set of all pairs $(\frac{1}{q},\frac{1}{r})$ such that $\MM$ is bounded from $L^{p,q}(\XX)$ to $L^{p,r}(\XX)$. For each fixed $p$ all possible shapes of $\Omega^p_{\rm HL}(\XX)$ are characterized. Namely, we show that the boundary of $\Omega^p_{\rm HL}(\XX)$ either is empty or takes the form $$\{ \delta \} \times [0, \lim_{u \rightarrow \delta} F(u)] \ \cup \ \{(u, F(u)) : u \in (\delta, 1] \},$$ 
	where $\delta \in [0,1]$ and $F \colon [\delta, 1] \rightarrow [0,1]$ is concave, non-decreasing, and satisfying $F(u) \leq u$. Conversely, for each such $F$ we find $\XX$ such that $\MM$ is bounded from $L^{p,q}(\XX)$ to $L^{p,r}(\XX)$ if and only if the point $(\frac{1}{q}, \frac{1}{r})$ lies on or under the graph of $F$, that is, $\frac{1}{q} \geq \delta$ and $\frac{1}{r} \leq F\big(\frac{1}{q}\big)$. 
	
	\medskip	
	\noindent \textbf{2020 Mathematics Subject Classification:} Primary 42B25, 46E30.
	
	\medskip
	\noindent \textbf{Key words:} centered Hardy--Littlewood maximal operator, Lorentz space, non-doubling metric measure space.
	\end{abstract}
	
	\maketitle
	
	\section{Introduction} \label{S1}
	
	Consider a metric measure space $\XX$, that is, a triple $(X, \rho, \mu)$, where $X$ is a set, $\rho$ is a metric, and $\mu$ is a Borel measure. 
	For any $x \in X$ and $s>0$ by $B(x,s) = B_{\rho}(x,s)$ we denote the open ball centered at $x$ and of radius $s$.  
	According to this the {\it centered Hardy--Littlewood maximal operator}, $\MM_\XX$, is defined by
	\begin{displaymath}
		\MM_\XX f(x) := \sup_{s > 0} \frac{1}{\mu(B(x,s))} \int_{B(x,s)} |f| \, {\rm d}\mu, \qquad x \in X,
	\end{displaymath}
	where $f \colon X \rightarrow \CCC$ is any Borel function. Here and later on, to avoid complicated notation as well as some further pathologies, we assume that balls $B$ satisfying $\mu(B) \in \{0, \infty\}$ are also taken into account, but in each such case we have $\frac{1}{\mu(B)} \int_{B} |f| \, {\rm d}\mu = 0$. 
	
	Recall that an operator $\HH$ is said to be {\it of strong type} $(p,p)$ (resp. {\it of weak type} $(p,p)$) for some $p \in [1, \infty]$ if $\HH$ is bounded on $L^p(\XX)$ (resp. from $L^p(\XX)$ to $L^{p,\infty}(\XX)$). Thus, for example, $\MM_\XX$ is of strong type $(\infty, \infty)$ no matter what the exact structure of $\XX$ is. Moreover, if the measure is {\it doubling} (that is, $\mu(B(x, 2s)) \leq C \mu(B(x, s))$ holds with some $C$ independent of $x$ and $s$), then $\MM_\XX$ is also of weak type $(1,1)$ and hence, by interpolation, of strong type $(p,p)$ for each $p \in (1,\infty)$. However, given an arbitrary (non-doubling) space $\XX$ it may happen that the weak type $(1,1)$ inequality for $\MM_\XX$ fails to occur. For example, in \cite{Sj} Sj\"ogren showed that this is the case for the two-dimensional Gaussian measure ${\rm d}\mu(x,y) = e^{-(x^2+y^2)/2} {\rm d}x {\rm d}y$ and the non-centered Hardy--Littlewood maximal operator (here by ``non-centered'' we mean that the supremum in the definition is taken over the family of balls containing $x$, not only those centered at $x$).
	
	There are several articles devoted to studying various mapping properties of Hardy--Littlewood maximal operators (or their modifications) in the context of non-doubling metric measure spaces (see, e.g., \cite{Al}, \cite{NTV}, \cite{Sa}, \cite{St1}). A particularly interesting task is to find spaces for which such properties are very specific. H.-Q. Li wrote a series of papers \cite{Li1, Li2, Li3} in which the so-called cusp spaces have been introduced for this purpose. For example, in \cite{Li2} it is shown that for each fixed $p_0 \in (1, \infty)$ there exists $\XX$ such that the associated centered maximal operator is of strong type $(p, p)$ if and only if $p > p_0$. 

	Recently, the author also contributed to the development of this field \cite{Ko1, Ko2, Ko3}. In particular, in \cite{Ko3} certain mapping properties of $\MM_\XX$ acting on Lorentz spaces $L^{p,q}(\XX)$ have been studied.
	More precisely, it is proven there that for each $p_0, q_0, r_0 \in (1, \infty)$ with $r_0 \geq q_0$ it is possible to construct 
	
	\begin{enumerate}[label=(\alph*)]
		\item a (non-doubling) space $\XX$ such that $\MM_\XX$ is bounded from $L^{p_0,q_0}(\XX)$ to $L^{p_0,r}(\XX)$ if and only if $r \geq r_0$,
		\item a (non-doubling) space $\XX$ such that $\MM_{\XX}$ is bounded from $L^{p_0,1}(\XX)$ to $L^{p_0,r_0}(\XX)$, but is not bounded from $L^{p_0,q_0}(\XX)$ to $L^{p_0,r_0}(\XX)$.
	\end{enumerate}
	Note that in both cases the boundedness of $\MM_\XX$ acting from $L^{p,q}(\XX)$ to $L^{p,r}(\XX)$ is studied and only one of the parameters $q,r$ is varying, while both, the remaining parameter and $p$, are fixed. The aim of this article is to strengthen the results of \cite{Ko3} by providing a~detailed analysis of the more complex problem in which $p$ is the only fixed parameter and all pairs $(q,r)$ are considered simultaneously. In the following theorem, which is the main result of this article, we characterize all possible shapes of the sets 
	$$
	\Omega^p_{\rm HL}(\XX) := \Big\{\Big(\frac{1}{q},\frac{1}{r}\Big) \in [0,1]^2 : \MM_\XX \ {\rm is} \ {\rm bounded} \ {\rm from} \ L^{p,q}(\XX) \ {\rm to} \ L^{p,r}(\XX)\Big\} \subset [0,1]^2
	$$
	(the shapes of these sets are described in terms of their topological boundaries and the underlying space is the square $[0,1]^2$ with its natural topology). 
	
	\begin{theorem} \label{T0}
		Fix $p \in (1, \infty)$. Then for each $\XX$ one of the following two possibilities holds:
		\begin{itemize}
			\item the boundary of $\Omega^p_{\rm HL}(\XX)$ is empty, that is, $\Omega^p_{\rm HL}(\XX) = \emptyset$ or $\Omega^p_{\rm HL}(\XX) = [0,1]^2$,
			\item the boundary of $\Omega^p_{\rm HL}(\XX)$ is of the form 
			$$\{ \delta \} \times [0, \lim_{u \rightarrow \delta} F(u)] \ \cup \ \{(u, F(u)) : u \in (\delta, 1] \},$$ 
			where $\delta \in [0,1]$ and $F \colon [\delta, 1] \rightarrow [0,1]$ is concave, non-decreasing, and satisfying $F(u) \leq u$.
		\end{itemize}
		Conversely, for each $F$ as above there exists $\XX$ such that 
		$\MM_\XX$ is bounded from $L^{p,q}(\XX)$ to $L^{p,r}(\XX)$
		if and only if the point $(\frac{1}{q}, \frac{1}{r})$ lies on or under the graph of $F$, that is, $\frac{1}{q} \geq \delta$ and $\frac{1}{r} \leq F\big(\frac{1}{q}\big)$.
	\end{theorem}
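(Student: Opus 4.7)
The claim splits into two parts: (I) every $\Omega := \Omega^p_{\rm HL}(\XX)$ has the listed shape, and (II) every admissible shape is realized by some $\XX$. I would treat them separately, the second part constituting the bulk of the work.

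For (I), I would establish three structural features of $\Omega$. \emph{Monotonicity}: the continuous inclusion $L^{p,q_0} \hookrightarrow L^{p,q_1}$ for $q_0 \leq q_1$ implies that if $(u_0, v_0) \in \Omega$, then every $(u, v) \in [0,1]^2$ with $u \geq u_0$ and $v \leq v_0$ also lies in $\Omega$. \emph{Convexity}: a standard interpolation argument --- whether via the real method applied to the Lorentz pair $(L^{p, q_0}, L^{p, q_1})$ at fixed $p$, or more directly via Hölder-type manipulation of the weighted norm inequalities for the decreasing rearrangements that characterize $L^{p,q} \to L^{p,r}$ boundedness --- shows that $\Omega$ is convex. \emph{Subdiagonality}: from $\MM_\XX f \geq |f|$ $\mu$-a.e.\ (valid whenever a suitable differentiation-type property holds, in particular on all spaces produced in the converse direction), one obtains $q \leq r$, i.e., $\Omega \subset \{v \leq u\}$. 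A convex, lower-right-closed subset of $[0,1]^2$ lying below the diagonal has as its upper boundary the graph of a concave, non-decreasing function $F \colon [\delta, 1] \to [0,1]$ satisfying $F(u) \leq u$; a possible downward jump of $F$ at the left endpoint accounts for the vertical segment $\{\delta\} \times [0, \lim_{u \to \delta} F(u)]$ in the description.

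For (II), my plan is to exploit concavity of $F$ by writing the subgraph of $F$ (intersected with $\{u \geq \delta\} \cap \{v \leq u\}$) as an intersection of a countable family of supporting half-planes, and to build $\XX$ as a gluing of matching ``building block'' spaces. For each selected supporting half-plane, I would construct an elementary metric measure space $\XX_n$ --- a cusp-type space in the spirit of H.-Q.~Li's constructions \cite{Li2} and the refinements in \cite{Ko3} --- whose geometric parameters are tuned so that $\Omega^p_{\rm HL}(\XX_n)$ equals precisely that half-plane intersected with $[0,1]^2$ and the diagonal constraint. I would then combine the $\XX_n$ into a single $\XX$ by placing them at mutually infinite distance, so that $\MM_\XX$ decomposes as the direct sum of the $\MM_{\XX_n}$, and show that $\Omega^p_{\rm HL}(\XX) = \bigcap_n \Omega^p_{\rm HL}(\XX_n)$, which equals the subgraph of $F$ by the concavity-envelope property.

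The principal obstacle is the construction of each individual $\XX_n$: establishing sharp two-sided bounds showing that $\Omega^p_{\rm HL}(\XX_n)$ is exactly (rather than merely contained in) the prescribed half-plane requires carefully calibrated geometric data --- ``depth'' and ``width'' parameters of the cusp encoding the slope and height of the desired supporting line --- together with delicate estimates on $\MM_{\XX_n}$ applied to Lorentz test functions concentrated on the cusp. A secondary technical difficulty lies in the assembly step: the Lorentz quasi-norm of a function on $\bigsqcup_n \XX_n$ is not a simple combination of the component quasi-norms (it depends on a joint decreasing rearrangement), so transferring uniform component-wise boundedness to boundedness on the whole space requires careful uniform-in-$n$ control of the constants. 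Finally, the boundary cases --- $\delta \in \{0, 1\}$, the behavior at $u = 1$ corresponding to strong type $(p,p)$, and the vertical boundary segment arising when $F$ has a jump at $\delta$ --- will each need separate handling.
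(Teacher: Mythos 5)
Your overall architecture matches the paper's: Part (I) via monotonicity, convexity (interpolation), and subdiagonality, giving a concave non-decreasing $F$ with $F(u)\leq u$; Part (II) via supporting-line building blocks combined countably. The paper's Section~\ref{S6} proves exactly your three structural features (Remark~\ref{R0}, Theorem~\ref{T2}, Remark~\ref{R1}), and its Sections~\ref{S3}--\ref{S5} carry out the building-block-plus-gluing construction using discrete test spaces $\SSS_{p,N,M,K,L}$ and the combining technique of \cite{Ko3}.

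Two points deserve tightening. First, in Part (I) the step ``$\MM_\XX f \geq |f|$ implies $q\leq r$'' is not by itself a proof: one must actually exhibit $f$ with $\|f\|_{p,r}/\|f\|_{p,q}$ arbitrarily large, and this requires the space to contain an infinite family of pairwise disjoint balls of positive finite measure. When that hypothesis fails, $\MM_\XX f \geq |f|$ may also fail a.e., but then $\Omega^p_{\rm HL}(\XX)=[0,1]^2$ trivially (the paper's Remark~\ref{R1} and the paragraph after it give exactly this dichotomy). You flag the hypothesis, but the dichotomy is what completes the argument for arbitrary $\XX$, not just the spaces you build in Part (II). Second, in Part (II) the paper's building blocks are \emph{not} spaces whose boundedness region is exactly a half-plane: each composite test space $\TT_n$ from Lemma~\ref{L5} is unbounded on a single supporting line $a_n/r - b_n/q = \gamma_n$, uniformly bounded (by a constant $\CC_5$ independent of $n$) sufficiently far below it, and uncontrolled in a buffer strip. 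The uniform constant in the ``deep'' region is precisely what makes $\sup_n \cc(p,q,r,\TT_n)<\infty$ at every good point, which is the right formulation of the gluing statement (your $\Omega=\bigcap_n \Omega(\XX_n)$ would be false without it). Relatedly, the combining itself is not ``placing the pieces at infinite distance'': Proposition~\ref{P1}, taken from \cite{Ko3}, arranges the component spaces inside a single finite-measure space with geometrically growing component masses, so that Lemma~\ref{L1} controls the joint rearrangement and Fact~\ref{F2} handles the cross-component averages. Your assessment of where the difficulties lie is accurate, but the resolutions --- the disjoint-balls dichotomy, the uniform buffer-zone estimate, and the nested-measure gluing --- are the actual technical content and would need to be supplied.
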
 
	
	To prove Theorem~\ref{T0} we should focus on two separate tasks. First we want to indicate some conditions that the sets $\Omega^p_{\rm HL}(\XX)$ must satisfy in general, in order to ensure that no situation other than those listed in Theorem~\ref{T0} is possible. This problem is treated in Section~\ref{S5} (see Remark~\ref{R0}, Remark~\ref{R1}, and Theorem~\ref{T2}). The second goal, which turns out to be the harder one, is to introduce a special class of spaces for which we are able to control precisely the behavior of the maximal operator and, at the same time, this behavior is very peculiar. This problem is covered by Theorem~\ref{T1} stated below. We note that, in fact, Theorem~\ref{T1} is slightly more general and it consists of four similar results which have been collected together for the sake of completeness. In what follows, for each $p \in (1, \infty)$ and $q, r \in [1, \infty]$ by $\cc(p,q,r,\XX)$ we mean the smallest constant $C$ for which
	\begin{displaymath}
	\|\MM_\XX f \|_{p,r} \leq C \|f\|_{p,q}, \qquad f \in L^{p,q}(\XX),
	\end{displaymath}
	holds (we use the convention $\cc(p,q,r,\XX) = \infty$ if no such constant exists).
	
	\begin{theorem} \label{T1}
	Fix $p \in (1, \infty)$ and $\delta \in [0, 1]$ (resp. $\delta \in [0, 1)$). Let $F \colon [\delta, 1] \rightarrow [0,1]$ (resp. $F \colon (\delta, 1] \rightarrow [0,1]$) be concave, non-decreasing and satisfying $F(u) \leq u$ for each $u \in [\delta, 1]$ (resp. $u \in (\delta, 1]$). Then
	\begin{itemize}
		\item there exists a (non-doubling) metric measure space $\YY$ such that $\cc(p,q,r, \YY) < \infty$ if and only if $\frac{1}{q} \geq \delta$ (resp. $\frac{1}{q} > \delta$) and $\frac{1}{r} \leq F\big(\frac{1}{q}\big)$, 
		\item there exists a (non-doubling) metric measure space $\ZZZ$ such that $\cc(p,q,r, \ZZZ) < \infty$ if and only if $\frac{1}{q} \geq \delta$ (resp. $\frac{1}{q} > \delta$) and $\frac{1}{r} < F\big(\frac{1}{q}\big)$.
	\end{itemize} 
	\end{theorem}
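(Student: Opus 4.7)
My plan is to build $\YY$ and $\ZZZ$ as weighted disjoint unions of a family of ``elementary'' metric measure spaces, each of which individually enforces a single linear constraint on the pair $(1/q, 1/r)$. The rationale is that boundedness of $\MM$ on a disjoint union $\bigsqcup_i \XX_i$ with norms suitably separated is equivalent to uniform boundedness of $\MM_{\XX_i}$, so the boundedness region of the union equals the intersection of the regions of the components. Since $F$ is concave and non-decreasing, the region $\{(u,v) : u \geq \delta,\ v \leq F(u)\}$ equals the intersection of all closed half-planes $\{v \leq \alpha u + \beta\}$ corresponding to supporting lines of $F$ (together with the vertical cut $\{u \geq \delta\}$). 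It therefore suffices to realize each such supporting half-plane as the exact boundedness region of one elementary space.

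The first step is to produce, for each admissible pair $(\alpha, \beta)$ with $0 \leq \alpha \leq 1$ and $\beta \geq 0$, a non-doubling metric measure space $\XX_{\alpha,\beta}$ for which $\cc(p,q,r,\XX_{\alpha,\beta})<\infty$ if and only if $1/r \leq \alpha/q + \beta$. I plan to take $\XX_{\alpha,\beta}$ as a countable collection of ``cusp-like'' components modelled on the ones employed by H.-Q.~Li and in \cite{Ko3}: each component is a ball with a distinguished center carrying a large point mass, surrounded by a tail whose measure decays at a carefully chosen rate. Computing $\MM$ on characteristic functions of the massive centers and matching distribution functions with the Lorentz quasinorms then pins down the line $v = \alpha u + \beta$ as the exact boundary of boundedness. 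The vertical cut $u = \delta$ is produced separately by a component where $\MM$ fails to be of restricted weak type $(p,q)$ for $1/q < \delta$; this is analogous to Li's construction giving a sharp exponent for strong type $(p,p)$.

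Next I would assemble $\YY$: choose a countable dense family of supporting lines of $F$, take the disjoint union of the corresponding spaces $\XX_{\alpha_k,\beta_k}$ (rescaling each so that its underlying set is a bounded component far from the others and the diameters/measures form a lacunary sequence, preserving the non-doubling property), and add, if $\delta>0$, the vertical-cut component. By the intersection principle above, $\cc(p,q,r,\YY)<\infty$ iff $(1/q,1/r)$ lies in every relevant half-plane, hence iff $1/q\geq\delta$ and $1/r\leq F(1/q)$. For $\ZZZ$, I would replace each elementary space by an analogous one for which $\cc(p,q,r,\cdot)<\infty$ holds only in the \emph{open} half-plane; this is arranged by letting the number of cusps in each elementary space tend to infinity, so that the constants blow up as one approaches the boundary line. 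The resulting $\ZZZ$ has open boundedness region $\{1/q\geq\delta,\ 1/r<F(1/q)\}$.

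The main obstacle, and the bulk of the technical work, is the sharp endpoint analysis needed to ensure that for $\YY$ the boundary is genuinely included. For each elementary space one has to prove a matching upper bound $\cc(p,q,r,\XX_{\alpha,\beta})<\infty$ on the defining line $1/r = \alpha/q + \beta$, which requires a delicate estimate of $\MM f$ in terms of the rearrangement $f^*$ rather than the cruder $L^{p,q}$ bounds. A related subtlety is the case $\delta = 0$ vs $\delta > 0$ and whether $F$ is defined at $\delta$ or only on $(\delta,1]$: in the latter case I would replace the single vertical-cut component by a sequence of components witnessing exponents $1/q \downarrow \delta$, producing a strict inequality $1/q > \delta$ in the final characterization. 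Once these endpoint verifications are in place, the combinatorial assembly and the verification that the combined space is non-doubling are routine.
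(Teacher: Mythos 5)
Your high-level strategy --- realize the hypograph of $F$ as an intersection of half-planes coming from supporting lines, construct one ``elementary'' space per line, and glue them together by a space-combining lemma that makes the norm on the union comparable to the supremum of the norms on the pieces --- is essentially the architecture the paper uses (Sections~\ref{S3}--\ref{S5}, Proposition~\ref{P1}, Proposition~\ref{P2}, Lemma~\ref{L5}, Lemma~\ref{L6}). You also correctly flag the two genuine difficulties: getting the boundary of the half-plane right, and handling the vertical cut at $1/q=\delta$ (including the case where the domain is $(\delta,1]$ or $F$ jumps at $\delta$). So the approach is the right one.

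The place where your plan and the paper's proof part ways, and where your proposal has a gap, is precisely the ``main obstacle'' you identify. You want each elementary space $\XX_{\alpha,\beta}$ to have boundedness region \emph{exactly equal} to a closed half-plane, and you anticipate needing a sharp endpoint estimate (rearrangement-level analysis along the line $1/r = \alpha/q + \beta$). The paper avoids this entirely by working from the complement: instead of one component per supporting line with a prescribed closed boundedness region, it enumerates rational ``bad'' points $P_n$ strictly outside the target set, and for each constructs a composite space $\TT_n$ (Lemma~\ref{L5}) whose norm blows up on a whole buffer strip $a_n/r-b_n/q>\gamma_n-3\epsilon_n d_n$ containing $P_n$, while remaining bounded by a constant $\CC_5(p,q,r)$ off that strip; the strips are arranged to stay inside the complement. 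Boundedness \emph{on} the graph of $F$ then comes for free --- every component is uniformly bounded there --- and no endpoint interpolation is ever needed. The quantitative tool making this possible is the explicit two-sided estimate $\cc(p,q,r,\SSS)\asymp 1+K^{-1+1/p}L^{1/p}M^{1/r}N^{1-1/q}$ for finite test spaces (Lemma~\ref{L4}), which your sketch does not have a substitute for. A second, smaller gap: for $\ZZZ$ the target set $\{1/q\geq\delta,\ 1/r<F(1/q)\}$ is not literally an intersection of open half-planes (it contains part of its relative boundary at $1/q=\delta$), so ``replace each elementary space by one with an open half-plane'' does not directly produce the right set; the paper handles this with a compactness/covering argument over the buffer strips together with a separate treatment of the vertical segment via $\TT^\leq$ and $\TT^<$ from Lemma~\ref{L6}, and your vertical-cut sketch would need the extra height parameter $\omega$ to cope with a jump of $F$ at $\delta$.
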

	
	\noindent A short comment should be made here regarding the spaces $\YY$ and $\ZZZ$. Although the word ``exists'' is used in the formulation of Theorem~\ref{T1}, each space is constructed explicitly. Moreover, the construction process described later on originates in the idea of Stempak, who provided some interesting examples of spaces, when dealing with a certain related problem regarding modified maximal operators (see \cite{St2}).    
	
	The rest of the paper is organized as follows. In Section~\ref{S2} some basic properties of Lorentz spaces are stated and the space combining technique introduced in \cite{Ko3} is recalled. In Section~\ref{S3} and Section~\ref{S4} we study the behavior of the maximal operator in the context of two classes of very specific spaces, namely the so-called test spaces and composite test spaces. The latter class will be used in Section~\ref{S5} to prove Theorem~\ref{T1}. Section~\ref{S6} is devoted to indicating several properties of $\Omega^p_{\rm HL}(\XX)$, which allow us to deduce the first part of Theorem~\ref{T0}. In particular, we formulate a suitable interpolation theorem for Lorentz spaces $L^{p,q}(\XX)$ with the first parameter fixed and the second varying (see Theorem~\ref{T2}). This theorem, in fact, follows from a much more general result \cite[Theorem 5.3.1]{BL} using advanced interpolation methods. However, in the Appendix we give its elementary proof, which, to the author's best knowledge, has never appeared in the literature so far. To avoid misunderstandings, we note that several times in the paper we identify $1 / \infty$ and $1/0$ with $0$ and $\infty$, respectively, when dealing with $q, r \in [1, \infty]$ and $u, F(u) \in [0,1]$. Also, for $\delta = 1$ the conventions $[\delta, 1] = \{1\}$, $(\delta,1] = \emptyset$, and $\lim_{u \rightarrow \delta} F(u) = F(1)$ are used. Finally, we emphasize that, in view of the equality $\MM_\XX f  = \MM_\XX |f|$, we can and will focus only on functions $f \geq 0$.
	
	\subsection*{Acknowledgements}
	I would like to express my deep gratitude to Professor Krzysztof Stempak
	for his valuable remarks and continuous help during the preparation of the paper. I am also indebted to Professor Lech Maligranda who explained to me how Theorem~\ref{T2} in Section~\ref{S6} can be derived from the general theory of interpolation. 
	
	\section{Preliminaries} \label{S2}
	
	We begin with some basic information about Lorentz spaces $L^{p,q}(\XX)$ (for more detailed studies see, e.g., \cite{BS}). For any Borel function $f \colon X \rightarrow \CCC$ we define the {\it distribution function} $d_f \colon [0, \infty) \rightarrow [0, \infty)$ by
	\begin{displaymath}
		d_f(t) := \mu( \{ x \in X : |f(x)| \geq t \}  ).
	\end{displaymath}
	Then for any $p \in [1, \infty)$ and $q \in [1, \infty]$ the space $L^{p,q}(\XX)$ consists of those functions $f$ for which the following quasi-norm 
	\begin{displaymath}
		\|f\|_{p,q} := \left\{ \begin{array}{rl}	
	p^{1/q} \Big( \int_0^\infty \big( t \, d_f(t)^{1/p} \big)^q \frac{{\rm d}t}{t}   \Big)^{1/q}& \textrm{if }  q \in [1, \infty),   \\
			\sup_{t > 0} t \, d_f(t)^{1/p} & \textrm{if }  q = \infty, \end{array} \right. 
	\end{displaymath}
	is finite. Recall that if $p=q$, then $(L^{p,q}(\XX), \| \cdot \|_{p,q})$ coincides with the standard Lebesgue space $(L^p(\XX), \| \cdot \|_p)$. Now we present several facts concerning $L^{p,q}(\XX)$ spaces. The metric measure space is arbitrary here, except for the condition $\mu(X) < \infty$ assumed in Fact~\ref{F2}.  
	
	\begin{fact} \label{F1}
		Let $p \in (1, \infty)$, $q \in [1, \infty]$, and $n_0 \in \NN$. Then there exists a numerical constant $\CC_\triangle(p,q)$ independent of $n_0$ and $\XX$ such that
		\begin{displaymath}
		\big\| \sum_{n=1}^{n_0} f_n \big\|_{p,q} \leq \CC_{\triangle}(p,q) \sum_{n=1}^{n_0} \|f_n \|_{p,q}, \qquad f_n \in L^{p,q}(\XX), \ n \in \{1, \dots, n_0\}. 
		\end{displaymath} 
	\end{fact}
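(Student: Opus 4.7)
The plan is to show that, for $p \in (1, \infty)$, the quasi-norm $\|\cdot\|_{p,q}$ is equivalent to a genuine norm; once this is in hand, the inequality with a constant independent of $n_0$ follows by iterating ordinary Minkowski. (This passage through a norm is necessary: merely iterating a two-summand quasi-triangle inequality $\|f+g\|_{p,q} \leq C(\|f\|_{p,q} + \|g\|_{p,q})$ would only give a constant of the form $C^{\log_2 n_0}$, which is not uniform in $n_0$.) To begin, I would rewrite $\|f\|_{p,q}$ through the non-increasing rearrangement $f^\ast(s) := \inf\{t > 0 : d_f(t) \leq s\}$; a routine Fubini/layer-cake calculation, which absorbs the normalizing factor $p^{1/q}$ appearing in the paper's definition, yields
$$\|f\|_{p,q} = \left(\int_0^\infty \bigl(s^{1/p} f^\ast(s)\bigr)^q \, \frac{ds}{s}\right)^{1/q},$$
with the usual $\sup_{s>0} s^{1/p} f^\ast(s)$ replacing the integral when $q = \infty$.

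Next I introduce the maximal rearrangement $f^{\ast\ast}(s) := \frac{1}{s}\int_0^s f^\ast(u)\,du$, whose decisive property is the pointwise subadditivity $(f+g)^{\ast\ast}(s) \leq f^{\ast\ast}(s) + g^{\ast\ast}(s)$. This follows from the variational identity $s\, f^{\ast\ast}(s) = \sup\{\int_E |f|\,d\mu : \mu(E) \leq s\}$, and, combined with Minkowski's inequality in $L^q((0,\infty), ds/s)$, it delivers a genuine triangle inequality $N(f+g) \leq N(f) + N(g)$ for
$$N(f) := \left(\int_0^\infty \bigl(s^{1/p} f^{\ast\ast}(s)\bigr)^q \, \frac{ds}{s}\right)^{1/q}$$
(with the analogous $\sup$ for $q = \infty$). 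Hence $N\bigl(\sum_{n=1}^{n_0} f_n\bigr) \leq \sum_{n=1}^{n_0} N(f_n)$ with no $n_0$-dependence. Since $f^\ast \leq f^{\ast\ast}$ pointwise, we have $\|f\|_{p,q} \leq N(f)$ for free; applying Hardy's inequality to the averaging operator that defines $f^{\ast\ast}$ gives the reverse estimate $N(f) \leq \frac{p}{p-1} \|f\|_{p,q}$. Combining the two inequalities yields the fact with $\CC_{\triangle}(p,q) := \frac{p}{p-1}$, a constant independent of both $q$ and $n_0$.

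The main obstacle is the Hardy step, which is precisely where the hypothesis $p > 1$ enters: at $p = 1$ the Hardy averaging operator is unbounded on the relevant weighted $L^q$ space and, consistently, $L^{1,q}$ is only quasi-normable in general. At the endpoint $q = \infty$ the Hardy inequality reduces to the elementary pointwise bound $f^{\ast\ast}(s) \leq \frac{p}{p-1} s^{-1/p} \|f\|_{p,\infty}$, obtained by integrating $u^{1/p} f^\ast(u) \leq \|f\|_{p,\infty}$ against $du$ on $(0,s)$; this case I would record separately. A minor side issue is justifying the variational identity for $s\, f^{\ast\ast}$ in an abstract metric-measure setting, where sets of measure exactly $s$ need not exist; this is handled by a standard approximation with nested sublevel sets of $|f|$, and does not affect the constant.
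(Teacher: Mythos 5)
Your proof is correct and is essentially the argument encapsulated in the references the paper invokes: the paper disposes of Fact~\ref{F1} by citing \cite[Lemma 4.5 and Theorem 4.6]{BS}, which are precisely the Hardy-inequality step ($N(f) \leq \frac{p}{p-1}\|f\|_{p,q}$ for $p>1$) and the resulting equivalence between the $f^\ast$-quasi-norm and the genuine $f^{\ast\ast}$-norm. Your writeup simply makes that chain explicit, including the correct observation that passing through a true norm is what eliminates the $n_0$-dependence.
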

	
	\begin{fact} \label{F2}
		Let $p \in (1, \infty)$ and $q \in [1, \infty]$, and assume that $\mu(X) < \infty$. Then there exists a~numerical constant $\CC_{\rm{avg}}(p,q)$ independent of $\XX$ such that
		\begin{displaymath}
		\| f_{\rm{avg}} \|_{p,q} \leq \CC_{\rm{avg}}(p,q) \|f \|_{p,q}, \qquad f \in L^{p,q}(\XX), 
		\end{displaymath}
		where $ f_{\rm{avg}} := \|f\|_1 / \mu(\XX)$ is constant. 
	\end{fact}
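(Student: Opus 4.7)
The plan is to reduce the bound to two elementary ingredients: the exact value of $\|c\, \mathbf{1}_X\|_{p,q}$ for a constant $c$, together with a layer-cake estimate for $\|f\|_1$ in terms of $\|f\|_{p,\infty}$ and $\mu(X)$. First I would note that $f_{\rm avg}$ is constant, so $d_{f_{\rm avg}}(t) = \mu(X)\, \mathbf{1}_{[0, f_{\rm avg})}(t)$, and a direct computation gives
$$ \|f_{\rm avg}\|_{p,q} \;=\; \alpha(p,q)\, f_{\rm avg}\, \mu(X)^{1/p} \;=\; \alpha(p,q)\, \|f\|_1\, \mu(X)^{1/p - 1}, $$
where $\alpha(p,q) = (p/q)^{1/q}$ for $q < \infty$ and $\alpha(p,\infty) = 1$ by convention.

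The second step is to dominate $\|f\|_1$. Using $d_f(t) \leq \min\{\mu(X),\, \|f\|_{p,\infty}^p\, t^{-p}\}$ and splitting the integral $\int_0^\infty d_f(t)\, {\rm d}t$ at the point $t_0 = \|f\|_{p,\infty}\, \mu(X)^{-1/p}$ where the two pointwise bounds meet, a one-line computation produces
$$ \|f\|_1 \;\leq\; \tfrac{p}{p-1}\, \|f\|_{p,\infty}\, \mu(X)^{1 - 1/p}. $$
Third, since $d_f$ is non-increasing, restricting the integral defining $\|f\|_{p,q}$ (for $q < \infty$) to the single interval $(0,s)$ and using $d_f(t) \geq d_f(s)$ there yields $s\, d_f(s)^{1/p} \leq (q/p)^{1/q} \|f\|_{p,q}$, hence $\|f\|_{p,\infty} \leq \beta(p,q)\, \|f\|_{p,q}$ with $\beta(p,q) = (q/p)^{1/q}$ and the obvious convention $\beta(p,\infty) = 1$.

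Chaining the three inequalities, the powers of $\mu(X)$ cancel exactly, and one reads off
$$ \|f_{\rm avg}\|_{p,q} \;\leq\; \alpha(p,q)\, \beta(p,q)\, \tfrac{p}{p-1}\, \|f\|_{p,q}, $$
so that $\CC_{\rm avg}(p,q)$ may be taken equal to $\alpha(p,q)\beta(p,q)\, p/(p-1)$ (in fact $\alpha \beta \equiv 1$, so even $\CC_{\rm avg}(p,q) = p/(p-1)$ works, independent of $q$). There is no substantial obstacle in the argument; it is really just careful bookkeeping of constants. The only minor subtlety is the separate treatment of $q = \infty$, where the formulas for $\|c\, \mathbf{1}_X\|_{p,q}$ and for the embedding $L^{p,q} \hookrightarrow L^{p,\infty}$ must be interpreted as their natural limits.
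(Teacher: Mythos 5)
Your proof is correct. The paper itself does not spell out an argument for Fact~\ref{F2}; it simply notes that it is an easy consequence of \cite[Lemma 4.5 and Theorem 4.6]{BS}, which amount to the nesting $L^{p,q}\hookrightarrow L^{p,\infty}$ and the continuous embedding $L^{p,\infty}(\XX)\hookrightarrow L^1(\XX)$ for a finite measure and $p>1$. Your three steps reproduce exactly these two ingredients together with the explicit formula $\|c\cdot\chi_X\|_{p,q}=(p/q)^{1/q}\,c\,\mu(X)^{1/p}$, so the route is essentially the same one the reference provides; what you add is a self-contained, elementary derivation rather than a citation. The computations check out: the split of the layer-cake integral at $t_0=\|f\|_{p,\infty}\mu(X)^{-1/p}$ gives $\|f\|_1\leq\frac{p}{p-1}\|f\|_{p,\infty}\mu(X)^{1-1/p}$ (the tail converges precisely because $p>1$), the one-interval estimate gives $\|f\|_{p,\infty}\leq(q/p)^{1/q}\|f\|_{p,q}$, and the powers of $\mu(X)$ cancel as claimed, leaving $\CC_{\rm avg}(p,q)=p/(p-1)$. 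The observation that the constant can be taken independent of $q$ is a nice bonus not visible from the citation. (The only cosmetic nit: with the paper's convention $d_f(t)=\mu(\{|f|\geq t\})$ the indicator for the constant function should be over $[0,f_{\rm avg}]$ rather than $[0,f_{\rm avg})$, but this is a null set and changes nothing.)
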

	
	\begin{fact} \label{F3}
		Let $p \in (1, \infty)$ and $1 \leq q \leq r \leq \infty$. Then $L^{p,q}(\XX) \subset L^{p,r}(\XX)$ and there exists a~numerical constant $\CC_{\hookrightarrow}(p,q,r)$ independent of $\XX$ such that
		\begin{displaymath}
		\|f \|_{p,r} \leq \CC_{\hookrightarrow}(p,q,r) \|f \|_{p,q}, \qquad f \in L^{p,q}(\XX). 
		\end{displaymath} 
	\end{fact}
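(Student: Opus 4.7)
The plan is to reduce the general embedding $L^{p,q}(\XX) \hookrightarrow L^{p,r}(\XX)$ to the single endpoint embedding $L^{p,q}(\XX) \hookrightarrow L^{p,\infty}(\XX)$, and then interpolate. Note first the trivial cases: if $q = \infty$, the assumption $q \leq r$ forces $r = \infty$ and there is nothing to prove; and if $q = r$, the constant is $1$. So we may assume $q \in [1, \infty)$ and $q < r \leq \infty$.

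\textbf{Step 1 (weak endpoint).} I would first show $\|f\|_{p,\infty} \leq (q/p)^{1/q} \|f\|_{p,q}$. The key observation is that $d_f$ is non-increasing: for any fixed $s > 0$ and any $t \in (0, s]$ we have $d_f(t) \geq d_f(s)$, so
\begin{displaymath}
\|f\|_{p,q}^q \;=\; p \int_0^\infty \bigl(t\, d_f(t)^{1/p}\bigr)^q \frac{{\rm d}t}{t} \;\geq\; p\, d_f(s)^{q/p} \int_0^s t^{q-1}\, {\rm d}t \;=\; \frac{p}{q}\, \bigl(s\, d_f(s)^{1/p}\bigr)^q.
\end{displaymath}
Taking the $q$-th root and then the supremum over $s > 0$ yields the desired bound.

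\textbf{Step 2 (interpolation).} For finite $r$, I would split the exponent $r = (r-q) + q$ inside the defining integral and pull out one factor uniformly:
\begin{displaymath}
\|f\|_{p,r}^r \;=\; p \int_0^\infty \bigl(t\, d_f(t)^{1/p}\bigr)^{r-q}\, \bigl(t\, d_f(t)^{1/p}\bigr)^{q} \frac{{\rm d}t}{t} \;\leq\; \|f\|_{p,\infty}^{r-q}\, \|f\|_{p,q}^q.
\end{displaymath}
Substituting the bound from Step~1 gives $\|f\|_{p,r} \leq (q/p)^{(r-q)/(qr)} \|f\|_{p,q}$, so one may take $\CC_{\hookrightarrow}(p,q,r) := (q/p)^{(r-q)/(qr)}$, which is clearly independent of $\XX$; for $r = \infty$ the constant from Step~1 itself works.

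There is no genuine obstacle here: the only subtlety is the $q = \infty$ edge case, which is disposed of immediately, and care with the normalizing factor $p^{1/q}$ in the definition of $\|\cdot\|_{p,q}$. The proof is essentially an application of the monotonicity of $d_f$ together with a one-line interpolation, both standard in the theory of Lorentz spaces.
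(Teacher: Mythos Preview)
Your proof is correct; the monotonicity argument for the weak endpoint and the exponent-splitting interpolation are both valid as written, and the constant $(q/p)^{(r-q)/(qr)}$ is indeed independent of $\XX$. The paper itself does not prove Fact~\ref{F3} but simply cites \cite[Proposition~4.2]{BS}, so there is no in-paper argument to compare against; your direct proof is the standard one and is precisely what one finds in that reference.
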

	
	\noindent Fact~\ref{F3} is well known (see, e.g., \cite[Proposition 4.2]{BS}), while Fact~\ref{F1} and Fact~\ref{F2} are easy consequences of \cite[Lemma 4.5 and Theorem 4.6]{BS}.
		
		We also need the following auxiliary lemma.

		\begin{lemma} \label{L1}
			Fix $p \in (1, \infty)$, $q \in [1, \infty]$, and $n_0 \in \NN$, and consider a finite sequence of functions $\{ f_n \}_{n=1}^{n_0}$ with pairwise disjoint supports $A_n$. Assume that for each $n \geq 2$ and $t > 0$ we have either $d_{f_n}(t) \geq \mu(A_1 \cup \dots \cup A_{n-1})$ or $d_{f_n}(t) = 0$. Then for some numerical constant $\CC_1 = \CC_1(p,q)$ independent of $\XX$, $n_0$, and $\{ f_n \}_{n=1}^{n_0}$ we have: if $q \in [1, \infty)$,
			\begin{displaymath}
			\frac{1}{\CC_1} \, \Big(  \sum_{n=1}^{n_0} \| f_n\|_{p,q}^q \Big)^{1/q} \leq \big\| \sum_{n=1}^{n_0} f_n \big\|_{p,q} \leq \CC_1 \, \Big(  \sum_{n=1}^{n_0} \| f_n\|_{p,q}^q \Big)^{1/q},
			\end{displaymath}
			or, if $q = \infty$,
			\begin{displaymath}
			\frac{1}{\CC_1} \, \sup_{n \in \{1, \dots, n_0\}} \| f_n\|_{p,\infty} \leq \big\| \sum_{n=1}^{n_0} f_n \big\|_{p,\infty} \leq \CC_1 \, \sup_{n \in \{1, \dots, n_0\}} \| f_n\|_{p,\infty}. 
			\end{displaymath}
		\end{lemma}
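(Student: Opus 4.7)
The plan is to reduce the estimate to a pointwise comparison at each fixed $t > 0$. Since the supports are pairwise disjoint, one has $d_{\sum f_n}(t) = \sum_n d_{f_n}(t)$. Enumerating the indices where $f_n$ is nonzero at level $t$ as $\{n : d_{f_n}(t) > 0\} = \{n_1 < \cdots < n_k\}$, write $b_i := d_{f_{n_i}}(t)$. Since $d_{f_m}(t) \leq \mu(A_m)$, the hypothesis yields $b_{i+1} \geq \mu(A_1 \cup \cdots \cup A_{n_{i+1}-1}) \geq b_1 + \cdots + b_i$. Setting $B_i := \sum_{j \leq i} b_j$, this gives $B_{i+1} \geq 2 B_i$, which implies both $\sum_j b_j \leq 2 b_k$ and (by iterating the doubling downward) $b_i \leq 2^{i-k+1} b_k$ for every $i \leq k$.

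The key elementary ingredient is the following: for any $s > 0$ and any finite sequence $b_1, \dots, b_k \geq 0$ satisfying $b_{i+1} \geq b_1 + \cdots + b_i$ for every $i$,
\begin{displaymath}
C_s^{-1} \Big( \sum_{i=1}^k b_i \Big)^s \leq \sum_{i=1}^k b_i^s \leq C_s \Big( \sum_{i=1}^k b_i \Big)^s,
\end{displaymath}
with $C_s$ depending only on $s$. The upper bound follows by summing $b_i^s \leq 2^{(i-k+1)s} b_k^s$ geometrically and using $b_k \leq \sum_j b_j$; the lower bound follows from $b_k^s \leq \sum_i b_i^s$ together with $\sum_j b_j \leq 2 b_k$. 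Applying this pointwise in $t$ with $s = q/p$ yields $\big( \sum_n d_{f_n}(t) \big)^{q/p} \sim \sum_n d_{f_n}(t)^{q/p}$ with constants depending only on $p$ and $q$.

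For $q \in [1, \infty)$, integrating this pointwise comparison against $p\, t^{q-1} \, {\rm d}t$ in the formula $\|g\|_{p,q}^q = p \int_0^\infty t^{q-1} d_g(t)^{q/p}\,{\rm d}t$ and swapping the integral with the finite sum over $n$ produces the desired two-sided bound $\big\| \sum_n f_n \big\|_{p,q}^q \sim \sum_n \|f_n\|_{p,q}^q$. For $q = \infty$, the case $s = 1$ reduces to $\sum_n d_{f_n}(t) \sim \max_n d_{f_n}(t)$, and then taking $\sup_t$ of $t (\cdot)^{1/p}$ and commuting it with $\max_n$ yields $\big\| \sum_n f_n \big\|_{p,\infty} \sim \sup_n \|f_n\|_{p,\infty}$. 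I expect the only non-routine step to be the upper bound for $s = q/p < 1$ in the elementary lemma: the standard concavity inequality $(\sum b_i)^s \leq \sum b_i^s$ goes the wrong way, and the geometric decay $b_i \leq 2^{i-k+1} b_k$ coming from the structural hypothesis is precisely what makes $\sum b_i^s$ summable by a constant multiple of $b_k^s$, with a constant independent of $k$.
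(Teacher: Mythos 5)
Your proof is correct and takes essentially the same approach as the paper. The paper's proof is only a one-line sketch reducing the lemma to the pointwise comparability (in $t$) of $d_f(t)^{1/p}$ and $\bigl(\sum_n d_{f_n}(t)^{q/p}\bigr)^{1/q}$; your argument fills in exactly that pointwise comparison via the doubling chain $B_{i+1}\geq 2B_i$ and the resulting geometric decay, which is the intended justification.
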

		
		\begin{proof}
			Let $f = \sum_{n=1}^{n_0} f_n$ and consider $q \in [1, \infty)$ (the case $q = \infty$ is very similar). The thesis is an easy consequence of the fact that, under the specified assumptions, the quantities $d_{f}(t)^{1/p}$ and $( \sum_{n=1}^{n_0} d_{f_n}(t)^{q/p})^{1/q}$ are comparable to each other with multiplicative constants independent of $t > 0$.    
		\end{proof}
	
	The main tool used in the proof of Theorem~\ref{T1} is the {\it space combining technique}, which, in the context of Lorentz spaces, has been introduced in \cite{Ko3}. Here we present only the key result that the application of this technique gives.
	
	\begin{proposition} \label{P1}
		Let $(\XX_n)_{n \in \NN}$ be a given sequence of metric measure spaces and assume that each of them consists of finitely many elements. Let $\XX$ be the space constructed with an aid of $(\XX_n)_{n \in \NN}$ by using the method described in \cite[Section 4]{Ko3}. Then for each $p \in (1, \infty)$ and $1 \leq q \leq r \leq \infty$ we have 
		\begin{equation} \label{eq1}
			\frac{1}{\CC} \, \sup_{n \in \NN} \, \cc(p,q,r,\XX_n) \leq \cc(p,q,r,\XX) \leq \CC \,\sup_{n \in \NN} \, \cc(p,q,r,\XX_n),
		\end{equation}
		where $\CC = \CC(p,q,r)$ is a numerical constant independent of $(\XX_n)_{n \in \NN}$.
	\end{proposition}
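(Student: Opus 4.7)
The plan is to establish the two inequalities in \eqref{eq1} separately. The lower bound is the easier direction. It should follow from the structural property of the construction in \cite[Section~4]{Ko3}, namely that each $\XX_n$ is embedded in $\XX$ via some injection $\iota_n$ in such a way that Lorentz quasi-norms of functions supported on $\iota_n(\XX_n)$ are preserved up to a universal constant, and that $\MM_\XX \tilde{g}(\iota_n(x)) \geq c\, \MM_{\XX_n} g(x)$ for each Borel $g \colon \XX_n \to \CCC$, where $\tilde{g}$ denotes the canonical lift of $g$ to $\XX$ and $c > 0$ depends only on the construction. Taking $g$ to nearly realize $\cc(p,q,r, \XX_n)$ and lifting via $\iota_n$ then yields $\cc(p,q,r, \XX) \geq \CC^{-1} \cc(p,q,r, \XX_n)$ for every $n$, which gives the required lower bound.

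For the upper bound, I would take $0 \leq f \in L^{p,q}(\XX)$ and decompose $f = \sum_{n} f_n$ with $f_n := f \cdot \mathbf{1}_{\iota_n(\XX_n)}$, and then split $\MM_\XX f$ pointwise into a \emph{local} contribution, produced by balls that essentially remain within a single component, and a \emph{global} contribution, produced by balls containing several full components. At points of $\iota_n(\XX_n)$ the local part is dominated by a constant multiple of the lift of $\MM_{\XX_n} f_n$, while the global part reduces to a sum of averaging terms whose values on $\iota_m(\XX_m)$ are essentially constant multiples of $\|f_m\|_1 / \mu(\iota_m(\XX_m))$ (suitably decaying on other components because of the geometric design of the construction). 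The global part is therefore handled by Fact~\ref{F2} together with Fact~\ref{F1}, producing a bound of the form $\CC \|f\|_{p,q}$ uniformly in $(\XX_n)$.

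It remains to estimate the Lorentz norm of the local part. The construction in \cite[Section~4]{Ko3} is tailored so that the supports $\iota_n(\XX_n)$ are pairwise disjoint and their measures grow fast enough for the hypotheses of Lemma~\ref{L1} to be met both by $\sum_n f_n$ and by the sum of the lifts of $\MM_{\XX_n} f_n$. Applying Lemma~\ref{L1} and then the defining inequality $\|\MM_{\XX_n} f_n\|_{p,r} \leq \cc(p,q,r, \XX_n)\, \|f_n\|_{p,q}$ yields, for $q \in [1,\infty)$,
$$\big\| \textstyle\sum_n \widetilde{\MM_{\XX_n} f_n} \big\|_{p,r} \leq \CC_1 \Big( \sum_n \|\MM_{\XX_n} f_n\|_{p,r}^r \Big)^{1/r} \leq \CC_1 \sup_n \cc(p,q,r, \XX_n) \Big( \sum_n \|f_n\|_{p,q}^r \Big)^{1/r},$$
after which the standard $\ell^q \hookrightarrow \ell^r$ inclusion (valid because $r \geq q$) together with a second application of Lemma~\ref{L1} bounds the last factor by a constant times $\|f\|_{p,q}$. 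The case $q = \infty$ is handled analogously with suprema in place of $\ell^q$ sums. The main obstacle, and the step where the specific design of \cite[Section~4]{Ko3} really enters, is verifying that the global contribution admits a bound uniform in $(\XX_n)$; the local contribution, by contrast, is a fairly mechanical chain of applications of Lemma~\ref{L1}.
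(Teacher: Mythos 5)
Your proposal is correct and follows essentially the same route the paper indicates: the paper simply defers to the proof of \cite[Proposition 1]{Ko3} and remarks that the two decisive ingredients are Fact~\ref{F2} (to tame the cross-component averaging) and an argument in the spirit of Lemma~\ref{L1} (to handle the disjointly supported local pieces), which is precisely the local/global decomposition and the chain of Lemma~\ref{L1} applications you carry out. The one point worth stating explicitly, since it is what actually licenses both uses of Lemma~\ref{L1}, is that in the construction of \cite[Section 4]{Ko3} every atom of the $n$-th component has measure exceeding the total measure of all earlier components, so the distributional hypothesis of Lemma~\ref{L1} holds automatically for \emph{any} function supported on a component, not merely for specially chosen ones.
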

	
	\noindent Proposition~\ref{P1} can be deduced directly from the proof of \cite[Proposition 1]{Ko3}. We also notice that the two most important ingredients used to obtain \eqref{eq1} are Fact~\ref{F2} and a~certain argument in the spirit of Lemma~\ref{L1}. 
	
	Two more comments are in order here. Whenever we want to apply Proposition~\ref{P1} later on, we omit the details related to the proper indexing of the component spaces. The only important fact is that each time we use countably many spaces. Finally, we indicate that each space $\XX$ obtained by using Proposition~\ref{P1} is non-doubling. 
	
	\section{Test spaces} \label{S3}
	
	We now introduce and analyze certain auxiliary structures which we call \textit{test} \textit{spaces} later on. We emphasize here that each test space may be used as a component space in Proposition~\ref{P1}, because it consists of finitely many elements.
	
	Let $p \in (1, \infty)$ and take $N, M, L \in \NN$. We associate with each quadruple $(p, N, M, L)$ four sequences of positive integers, $(m_i)_{i=1}^N$, $(h_i)_{i=1}^N$, $(\alpha_i)_{i=1}^M$, and $(\beta_i)_{i=1}^M$, satisfying the following assertions:
		\begin{enumerate}[label=(\roman*)]
			\item \label{i} $h_{N} / h_i \in \NN$,
			\item \label{ii} $m_{i+1} \geq 2 m_i h_i$,
			\item \label{iii} $1 \leq m_{i}^{1-p}h_i <2$,
			\item \label{iv} $\alpha_1 \geq 2 m_N h_N$,
			\item \label{v} $\alpha_{i+1} \geq 2 \alpha_i L \beta_i h_N$,
			\item \label{vi} $1 \leq \alpha_{i}^{1-p} \beta_i h_N <2$.	
		\end{enumerate}
	We kindly ask the reader to consult \cite{Ko3} in order to make sure that the properties~\ref{i}--\ref{vi} can be met simultaneously. Some further explanations about the role of these objects are also given there.
	
	For fixed $K \in [1, \infty)$ we define a test space $\SSS = \SSS_{p,N,M,K,L} = (S, \rho, \mu)$ as follows. Set
	\begin{displaymath}
	S := \{x_{i,j}, \, x^\circ_{k,l} : i=1, \dots, N, \ j=1, \dots , h_i, \ k=1, \dots, M, \ l = 1, \dots , L \beta_k h_N\},
	\end{displaymath}
	where all elements $x_{i,j}, \, x^\circ_{k,l}$ are pairwise different. We use some auxiliary symbols for certain subsets of $S$: 
	\begin{displaymath}
	S^\circ := \{x^\circ_{k,l} : k=1, \dots, M, \ l=1, \dots, L \beta_k h_N\};
	\end{displaymath}	
	for $i = 1, \dots, N$ and $k = 1, \dots, M$,
	\begin{displaymath}
	S_{i} := \{x_{i,j} : j=1, \dots, h_i \}, \quad
	S^\circ_{k} := \{x^\circ_{k,l} : l=1, \dots, L \beta_k h_N\};
	\end{displaymath}
	for $i = 1, \dots, N$, $j=1, \dots, h_i$, and $k = 1, \dots, M$,  
	\begin{displaymath}
	S^\circ_{i,j,k} := \Big\{x^\circ_{k,l} : l \in \Big(\frac{j-1}{h_i} L \beta_k h_N, \frac{j}{h_i} L \beta_k h_N \Big]\Big\}.
	\end{displaymath}
	Observe that the sets $S^\circ_{i,j,k}$, $j = 1, \dots ,h_i$, are pairwise disjoint and, in view of \ref{i}, each of them contains exactly $L \beta_{k} h_N  / h_i$ elements. Moreover, $\bigcup_{j=1}^{h_i} S^\circ_{i,j,k} = S^\circ_{k}$ holds for each $i \in \{1, \dots, N\}$. 
	
	We introduce $\mu$ by letting $\mu(\{ x_{i,j} \}) := m_i$ and $\mu(\{ x^\circ_{k,l} \}) := K \alpha_k$. Note that, in view of \ref{iv}, \ref{ii}, and \ref{v}, the following inequalities hold: for each $x \in S^\circ$,
	\begin{displaymath}
	\mu(\{x\}) > \mu(S \setminus S^\circ),
	\end{displaymath}
	for each $1 < i \leq N$ and $x \in S_{i}$,   
	\begin{displaymath}
	\mu(\{x\}) > \mu(S_1 \cup \ldots \cup S_{i-1}),
	\end{displaymath}
	and for each $1 < k \leq M$ and $x^\circ \in S^\circ_{k}$,
	\begin{displaymath}
	\mu(\{x^\circ\}) > \mu(S^\circ_1 \cup \ldots \cup S^\circ_{k-1}).
	\end{displaymath}
	
	Finally, we define $\rho$ by the formula
	\begin{displaymath}
	\rho(x,y) := \left\{ \begin{array}{rl}
	0 & \textrm{if } x=y, \\
	1 & \textrm{if } \{x, y\} = \{x_{i,j},x^\circ_{k,l}\} \textrm{ and } x^\circ_{k,l} \in S^\circ_{i,j,k},  \\
	2 & \textrm{otherwise.} \end{array} \right. 
	\end{displaymath}
	It is worth noting here that for each $i \in \{1 \dots N \}$, $ k \in \{1 \dots M \}$, and $x^\circ \in S^\circ_{k}$, there is exactly one point $x \in S_i$ such that $\rho(x,x^\circ)=1$. This point is denoted by $\Gamma_i(x^\circ)$ later on.
	
	Figure 1 shows a model of the space $(S, \rho)$ for $N=3$ and $M=2$. The solid line between two points indicates that the distance between them equals $1$. Otherwise the distance equals $2$.  
	
	\begin{figure}[H]
		\begin{tikzpicture}
		[scale=.7,auto=left,every node/.style={circle,fill,inner sep=2pt}]
	
		\node[label={[yshift=-1cm]$x_{1,1}$}] (n1) at (1,1) {};
		
		\node[label={[yshift=-1cm]$x_{2,1}$}] (n2) at (5,1) {};
		\node[label={[yshift=-1cm]$x_{2,2}$}] (n3) at (7,1) {};
		
		\node[label={[yshift=-1cm]$x_{3,1}$}] (n4) at (11,1) {};
		\node[label={[yshift=-1cm]$x_{3,2}$}] (n5) at (12.5,1) {};
		\node[label={[yshift=-1cm]$x_{3,3}$}] (n6) at (14,1) {};
		\node[label={[yshift=-1cm]$x_{3,4}$}] (n7) at (15.5,1) {};
		
		\node[label={$x^\circ_{1,1}$}] (m1) at (2,5) {};
		\node[label={$x^\circ_{1,2}$}] (m2) at (3,5) {};
		\node[label={$x^\circ_{1,3}$}] (m3) at (4,5) {};
		\node[label={$x^\circ_{1,4}$}] (m4) at (5,5) {};
		
		\node[label={$x^\circ_{2,1}$}] (m5) at (7.5,5) {};
		\node[label={$x^\circ_{2,2}$}] (m6) at (8.5,5) {};
		\node[label={$x^\circ_{2,3}$}] (m7) at (9.5,5) {};
		\node[label={$x^\circ_{2,4}$}] (m8) at (10.5,5) {};
		\node[label={$x^\circ_{2,5}$}] (m9) at (11.5,5) {};
		\node[label={$x^\circ_{2,6}$}] (m10) at (12.5,5) {};
		\node[label={$x^\circ_{2,7}$}] (m11) at (13.5,5) {};
		\node[label={$x^\circ_{2,8}$}] (m12) at (14.5,5) {};
		
		\foreach \from/\to in {n1/m1, n1/m2, n1/m3, n1/m4, n1/m5, n1/m6, n1/m7, n1/m8, n1/m9, n1/m10, n1/m11, n1/m12}
		\draw (\from) -- (\to);
		
		\foreach \from/\to in {n2/m1, n2/m2, n3/m3, n3/m4, n2/m5, n2/m6, n2/m7, n2/m8, n3/m9, n3/m10, n3/m11, n3/m12}
		\draw (\from) -- (\to);
		
		\foreach \from/\to in {n4/m1, n5/m2, n6/m3, n7/m4, n4/m5, n4/m6, n5/m7, n5/m8, n6/m9, n6/m10, n7/m11, n7/m12}
		\draw (\from) -- (\to);
		
		\end{tikzpicture}
		\caption{The model of the space $(S, \rho)$ for $N=3$ and $M=2$.}
	\end{figure}
	
	For the convenience of the reader we explicitly describe any ball $B \subset S$. Thus we have: for $i \in \{ 1, \dots, N\}$, $j \in \{1, \dots, h_i\}$, 
	\begin{displaymath}
		B(x_{i,j},s) = \left\{ \begin{array}{rl}
		\{x_{i,j}\} & \textrm{for } 0 < s \leq 1, \\
		\{x_{i,j}\} \cup \{x^\circ \in S^\circ : \Gamma_i(x^\circ) = x_{i,j} \} & \textrm{for } 1 < s \leq 2,  \\
		S & \textrm{for } 2 < s, \end{array} \right.
	\end{displaymath} 
	\noindent and, for $k \in \{1, \dots, M\}$, $l \in \{1, \dots, L\beta_k h_N\}$,
	\begin{displaymath}
	B(x^\circ_{k,l},s) = \left\{ \begin{array}{rl}
	\{x^\circ_{k,l}\} & \textrm{for } 0 < s \leq 1, \\
	\{x^\circ_{k,l}\} \cup \{\Gamma_i(x^\circ_{k,l}) : i = 1, \dots, N \} & \textrm{for } 1 < s \leq 2,  \\
	S & \textrm{for } 2 < s. \end{array} \right.
	\end{displaymath}
	
	Now, for each fixed $i \in \{1 \dots N \}$ and $k \in \{1 \dots M \}$, we introduce a linear operator $\AAA_{k,i} = \AAA_{k,i, \SSS}$ given by the formula
	\begin{displaymath}
	\AAA_{k,i}f(x) := \left\{ \begin{array}{rl}
	\frac{f(\Gamma_i(x)) \, \mu(\{\Gamma_i(x)\})}{\mu(\{x\}) } & \textrm{if } x \in S^\circ_k,  \\
	0 & \textrm{otherwise.} \end{array} \right. 
	\end{displaymath}
	In the following lemma we estimate the norm of $\AAA_{k,i}$ considered as an operator acting from $L^{p,q}(\SSS)$ to $L^{p,r}(\SSS)$.  
	
	\begin{lemma} \label{L2}
		Let $\SSS = \SSS_{p,N,M,K,L}$ be the metric measure space defined as above. Fix $1 \leq q \leq r \leq \infty$, $i \in \{1, \dots, N\}$, and $k \in \{1, \dots M\}$, and consider the operator $\AAA_{k,i}$. Then there exists a numerical constant $\CC_2 = \CC_2(p, q, r)$ independent of $N$, $M$, $K$, $L$, $i$, and $k$ such that
	\begin{displaymath}
	\frac{1}{\CC_2} \, K^{-1+1/p} L^{1/p} \leq \|\AAA_{k,i} \|_{L^{p,q}(\SSS) \rightarrow L^{p,r}(\SSS)} \leq \CC_2 \, K^{-1+1/p} L^{1/p}.
	\end{displaymath}
	\end{lemma}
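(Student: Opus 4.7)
The plan is to compute the operator $\AAA_{k,i}$ essentially explicitly, via the distribution function. The first observation is that $\AAA_{k,i}f(x)$ depends only on $f|_{S_i}$, so one may restrict attention to functions supported on $S_i$. For $f \geq 0$ supported on $S_i$ and for $y = x_{i,j}$, the set $\Gamma_i^{-1}(y) \cap S^\circ_k = S^\circ_{i,j,k}$ consists of $L\beta_k h_N / h_i$ points, each of measure $K\alpha_k$, on which $\AAA_{k,i}f$ takes the constant value $f(y)\, m_i/(K\alpha_k)$. Summing over $y$ in the level set of $f|_{S_i}$, I obtain the identity
$$
d_{\AAA_{k,i}f}(s) \; = \; \frac{L K \beta_k h_N \alpha_k}{h_i m_i} \; d_{f|_{S_i}}\!\Big(\frac{K \alpha_k}{m_i}\, s\Big), \qquad s > 0.
$$

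Plugging this into the Lorentz quasi-norm and rescaling the integration variable by $K\alpha_k / m_i$ yields
$$
\|\AAA_{k,i}f\|_{p,r} \; = \; L^{1/p} K^{-1+1/p} \cdot (\beta_k h_N)^{1/p}\alpha_k^{1/p-1} \cdot m_i^{1-1/p} h_i^{-1/p} \cdot \|f|_{S_i}\|_{p,r}.
$$
Properties \ref{iii} and \ref{vi} imply $m_i^{1-1/p} h_i^{-1/p} \in (2^{-1/p}, 1]$ and $(\beta_k h_N)^{1/p}\alpha_k^{1/p-1} \in [1, 2^{1/p})$, so the prefactor is comparable to $L^{1/p} K^{-1+1/p}$ with a constant depending only on $p$. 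Note that this identity is independent of $q$, which reflects the fact that $\AAA_{k,i}$ is, up to this scalar, a measure-preserving "spreading" from $S_i$ onto $S^\circ_k$.

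For the upper bound in the lemma, I combine this identity with Fact~\ref{F3} (applicable because $q \leq r$) and with the trivial inequality $\|f|_{S_i}\|_{p,q} \leq \|f\|_{p,q}$, which follows from the monotonicity of the distribution function under multiplication by $\chi_{S_i}$. For the lower bound, I test $\AAA_{k,i}$ on $f = \chi_{\{x_{i,1}\}}$: one has $\|f\|_{p,q} = m_i^{1/p}$ for every $q$, while the value of $\|\AAA_{k,i}f\|_{p,r}$ follows directly from the identity above, producing the ratio $L^{1/p} K^{-1+1/p}$ up to the same bounded factors.

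The only real obstacle is the careful bookkeeping of the auxiliary factors $(\beta_k h_N)^{1/p}\alpha_k^{1/p-1}$ and $m_i^{1-1/p} h_i^{-1/p}$, and verifying that properties \ref{iii} and \ref{vi} trap them between universal constants depending only on $p$; there is no genuine interpolation step or deep estimate required, as the operator is essentially a scalar after the change of variables in the distribution function.
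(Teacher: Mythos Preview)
Your proof is correct and follows essentially the same approach as the paper: both compute the distribution-function identity $d_{\AAA_{k,i}f}(s) = \frac{KL\alpha_k\beta_k h_N}{m_ih_i}\,d_{f}(sK\alpha_k/m_i)$ for $f$ supported on $S_i$, derive the exact scalar relation between $\|\AAA_{k,i}f\|_{p,r}$ and $\|f\|_{p,r}$, and then invoke \ref{iii}, \ref{vi}, and Fact~\ref{F3}. The only cosmetic differences are that the paper tests the lower bound on $g=\chi_{S_i}$ rather than on a single-point indicator, and that your claim $\|\chi_{\{x_{i,1}\}}\|_{p,q}=m_i^{1/p}$ is off by the harmless factor $(p/q)^{1/q}$, which is absorbed into $\CC_2(p,q,r)$ anyway.
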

	
	\begin{proof}
		First we estimate $\|\AAA_{k,i} \|_{L^{p,q}(\SSS) \rightarrow L^{p,r}(\SSS)}$ from above. Take $f \in L^{p,q}(\SSS)$. Since $\AAA_{k,i} f = \AAA_{k,i} (f \cdot \chi_{S_i})$, we can assume that the support of $f$ is contained in $S_i$ (here and anywhere else in the paper $\chi_E$ denotes the indicator function of a given Borel set $E$). If this is the case, then for each $t>0$ we have the equality
	\begin{displaymath}
	d_{\AAA_{k,i}f}(t) = \frac{KL\alpha_k \beta_k h_N}{m_i h_i} d_f(tK\alpha_k / m_i)
	\end{displaymath}  
	and a simple calculation gives
	\begin{displaymath}
	\|\AAA_{k,i}f \|_{p,r} = K^{-1 + 1/p} L^{1/p} m_i^{1-1/p} h_i^{-1/p} \alpha_k^{-1 + 1/p} \beta_k^{1/p} h_N^{1/p} \|f\|_{p,r}.
	\end{displaymath}
	Thus, in view of \ref{iii}, \ref{vi}, and Fact~\ref{F3}, we obtain
	\begin{displaymath}
	\|\AAA_{k,i}f \|_{p,r} \leq 4 \, \CC_{\hookrightarrow}(p,q,r) \, K^{-1 + 1/p} L^{1/p} \|f\|_{p,q}.
	\end{displaymath}
	
	Finally, consider $g = \chi_{S_i}$. Then we have $\AAA_{k,i} g = \frac{m_i}{K\alpha_k} \, \chi_{S^\circ_k}$ and hence
	\begin{displaymath}
	\frac{ \|\AAA_{k,i}g\|_{p,r} }{\| g \|_{p,q} } = K^{-1 + 1/p} L^{1/p} m_i^{1-1/p} \alpha_k^{-1 + 1/p} \beta_k^{1/p} h_N^{1/p} h^{-1/p} \geq K^{-1 + 1/p} L^{1/p}, 
	\end{displaymath} 
	where in the last inequality we again used \ref{iii} and \ref{vi}. 
	\end{proof}

Next we introduce a linear operator $\AAA = \AAA_\SSS$ given by the formula
\begin{displaymath}
\AAA f(x) := \left\{ \begin{array}{rl}
\sum_{i=1}^{N} \AAA_{k,i}f(x) & \textrm{if } x \in S^\circ_k, \, k \in \{1, \dots, M\}, \\
0 & \textrm{otherwise.} \end{array} \right. 
\end{displaymath}
As before, we estimate the norm of $\AAA$ acting from $L^{p,q}(\SSS)$ to $L^{p,r}(\SSS)$.

\begin{lemma} \label{L3}
	Let $\SSS = \SSS_{p,N,M,K,L}$ be the metric measure space defined as above. Fix $1 \leq q \leq r \leq \infty$ and consider the operator $\AAA$. Then there exists a numerical constant $\CC_3 = \CC_3(p, q, r)$ independent of $N$, $M$, $K$, and $L$ such that
	\begin{displaymath}
	\frac{1}{\CC_3} \, K^{-1+1/p} L^{1/p} M^{1/r} N^{1 - 1/q} \leq \|\AAA \|_{L^{p,q}(\SSS) \rightarrow L^{p,r}(\SSS)} \leq \CC_3 \, K^{-1+1/p} L^{1/p} M^{1/r} N^{1 - 1/q}.
	\end{displaymath}
\end{lemma}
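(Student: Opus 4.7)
Since each $\AAA_{k,i}$ only evaluates $f$ at $\Gamma_i(x) \in S_i$, the operator $\AAA$ ignores the restriction of $f$ to $S^\circ$, so I may assume $f = \sum_{i=1}^N f_i$ with $f_i := f\chi_{S_i}$. Fact~\ref{F1} gives $\|\AAA f\|_{p,r} \leq \CC_\triangle \sum_i \|\AAA f_i\|_{p,r}$. For fixed $i$, the terms in the decomposition $\AAA f_i = \sum_{k=1}^M \AAA_{k,i} f_i$ have pairwise disjoint supports $S^\circ_k$, and each singleton in $S^\circ_k$ carries mass $K\alpha_k$ exceeding $\mu(S^\circ_1 \cup \cdots \cup S^\circ_{k-1})$ (this is the content of~\ref{v}), so Lemma~\ref{L1} gives
\[
\|\AAA f_i\|_{p,r} \leq \CC_1 \Big(\sum_{k=1}^M \|\AAA_{k,i} f_i\|_{p,r}^r\Big)^{1/r}
\]
with the usual supremum convention when $r = \infty$. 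Bounding each summand by Lemma~\ref{L2} contributes $K^{-1+1/p} L^{1/p}$ per term and $M^{1/r}$ overall. The remaining outer sum $\sum_i \|f_i\|_{p,q}$ is then estimated by H\"older as $N^{1-1/q}(\sum_i \|f_i\|_{p,q}^q)^{1/q}$, and Lemma~\ref{L1} applied once more (whose hypothesis now rests on $\mu(\{x\}) > \mu(S_1 \cup \cdots \cup S_{i-1})$ for $x \in S_i$, provided by~\ref{ii}) controls the last quantity by $\CC_1 \|f\|_{p,q}$. The four constants multiply to the desired $C K^{-1+1/p} L^{1/p} M^{1/r} N^{1-1/q}$.

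\textbf{Lower bound.} I exhibit the explicit test function $f_\star := \sum_{i=1}^N m_i^{-1} \chi_{S_i}$. A direct calculation with the distribution function gives $\|m_i^{-1} \chi_{S_i}\|_{p,q} = (p/q)^{1/q} (m_i^{1-p} h_i)^{1/p}$, which by~\ref{iii} is uniformly comparable to $1$, so Lemma~\ref{L1} yields $\|f_\star\|_{p,q} \sim N^{1/q}$. At any $x \in S^\circ_k$ one has
\[
\AAA f_\star(x) = \sum_{i=1}^N \frac{m_i \cdot m_i^{-1}}{K\alpha_k} = \frac{N}{K\alpha_k},
\]
so $\AAA f_\star = \sum_k \frac{N}{K\alpha_k} \chi_{S^\circ_k}$. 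The $k$-th piece has Lorentz norm proportional to $\frac{N}{K\alpha_k}(K\alpha_k L \beta_k h_N)^{1/p}$, which~\ref{vi} reduces to $\sim N K^{-1+1/p} L^{1/p}$ uniformly in $k$. Applying Lemma~\ref{L1} once more to the disjoint pieces on $S^\circ$ gives $\|\AAA f_\star\|_{p,r} \sim N K^{-1+1/p} L^{1/p} M^{1/r}$, and dividing by $\|f_\star\|_{p,q}$ yields the lower bound. A pleasant feature is that the same $f_\star$ serves for every $q, r$.

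\textbf{Main difficulty.} No single step is deep; the proof is bookkeeping in which the normalisations~\ref{iii} and~\ref{vi} absorb the awkward powers of $m_i, h_i, \alpha_k, \beta_k$ into universal constants. The only spot needing care is the three invocations of Lemma~\ref{L1}, each of which hinges on a strict singleton-vs-cumulative-mass inequality encoded into the construction of $\SSS$ by~\ref{ii} and~\ref{v}; these inequalities are exactly what allows the quasi-triangle estimate of Fact~\ref{F1} to be sharpened to the $\ell^r$-identity needed to extract the $M^{1/r}$. Once these verifications are recorded, the rest of the argument is mechanical.
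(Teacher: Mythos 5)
Your proof is correct and follows essentially the same route as the paper's: the same decomposition of $f$ over the $S_i$, the same three ingredients (Fact~\ref{F1}, Lemma~\ref{L1}, Lemma~\ref{L2}) plus H\"older for the $N^{1-1/q}$ factor in the upper bound, and a disjointly-supported test function constant on each $S_i$ for the lower bound. The only cosmetic differences are that you split over $i$ first (via Fact~\ref{F1}) and then over $k$ (via Lemma~\ref{L1}), whereas the paper does it in the opposite order, and your test function $\sum_i m_i^{-1}\chi_{S_i}$ differs from the paper's $\sum_i (h_i m_i)^{-1/p}\chi_{S_i}$ by factors that are uniformly comparable to $1$ by~\ref{iii}.
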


\begin{proof}
	First we estimate $\|\AAA \|_{L^{p,q}(\SSS) \rightarrow L^{p,r}(\SSS)}$ from above. Take $f \in L^{p,q}(\SSS)$. Since $\AAA f = \AAA(f \cdot \chi_{S \setminus S^\circ})$, we can assume that the support of $f$ is contained in $S \setminus S^\circ$. We decompose $f = \sum_{i=1}^N f_i$, where $f_i = f \cdot \chi_{S_i}$. Then, by \ref{ii}, \ref{v}, and Lemma~\ref{L1}, we have
	\begin{displaymath}
	\|f\|_{p,q} \geq \frac{1}{\CC_1(p,q)} \, \Big(  \sum_{i=1}^N \| f_i\|_{p,q}^q \Big)^{1/q}
	\end{displaymath}
	and
	\begin{displaymath}
	\|\AAA f\|_{p,r} \leq \CC_1(p,r) \, \Big(  \sum_{k=1}^M \big\| \AAA f \cdot \chi_{S^\circ_k}\big\|_{p,r}^r \Big)^{1/r}.
	\end{displaymath}
	Moreover, by using Fact~\ref{F1} and Lemma~\ref{L2}, we obtain the following estimate
	\begin{displaymath}
	\| \AAA f \cdot \chi_{S^\circ_k}\|_{p,r} \leq \CC_\triangle(p,r) \sum_{i=1}^N \| \AAA_{k,i} f_i \|_{p,r} \leq \CC_\triangle(p,r) \CC_2(p,q,r) K^{-1+1/p} L^{1/p} \sum_{i=1}^N \| f_i \|_{p,q}
	\end{displaymath}
	for each $k \in \{1, \dots, M\}$. Therefore, 
	\begin{displaymath}
	\|\AAA f\|_{p,r} \leq \CC_1(p,r) \CC_\triangle(p,r) \CC_2(p,q,r) K^{-1+1/p} L^{1/p} M^{1/r} \sum_{i=1}^N \| f_i\|_{p,q}.
	\end{displaymath}
	On the other hand, an application of H\"older's inequality gives
	\begin{displaymath}
	\Big(  \sum_{i=1}^N \| f_i\|_{p,q}^q \Big)^{1/q} \geq N^{-1 + 1/q} \sum_{i=1}^N \| f_i\|_{p,q}.
	\end{displaymath}
	Combining the above estimates we conclude that
	\begin{displaymath}
	\|\AAA f\|_{p,r} \leq \CC_1(p,q) \CC_1(p,r) \CC_\triangle(p,r) \CC_2(p,q,r) K^{-1+1/p} L^{1/p} M^{1/r} N^{1 - 1/q} \| f\|_{p,q}.
	\end{displaymath}
	
	Finally, consider $g = \sum_{i=1}^N (h_i m_i)^{-1/p} \cdot \chi_{S_i}$. Then, by using \ref{iii}, we have
	\begin{displaymath}
	\AAA g \geq \sum_{k =1}^M \frac{N}{2^{1/p} K \alpha_k} \cdot \chi_{S^\circ_k}
	\end{displaymath}
	and thus
	\begin{displaymath}
	\frac{ \|\AAA g\|_{p,r} }{\| g \|_{p,q} } \geq \frac{ \Big(\sum_{k=1}^M \big(K^{-1+1/p} L^{1/p} N \alpha_k^{-1+1/p} \beta_k^{1/p} h_N^{1/p} \big)^r \Big)^{1/r}  }{2^{1/p} \, \CC_1(p,q) \CC_1(p,r) N^{1/q}} \geq \frac{ K^{-1+1/p} L^{1/p} M^{1/r} N^{1 - 1/q} }{2^{1/p} \, \CC_1(p,q) \CC_1(p,r)}, 
	\end{displaymath}
	where we used \ref{ii}, \ref{v}, and Lemma~\ref{L1} in the first inequality, and \ref{vi} in the second.
\end{proof}

In the following lemma we estimate the norm of the maximal operator $\MM_\SSS$ acting from $L^{p,q}(\SSS)$ to $L^{p,r}(\SSS)$. This is the main result of this section. 

\begin{lemma} \label{L4}
	Let $\SSS = \SSS_{p,N,M,K,L}$ be the metric measure space defined as above. Fix $1 \leq q \leq r \leq \infty$ and consider the associated operator $\MM_\SSS$. Then there exists a numerical constant $\CC_4 = \CC_4(p, q, r)$ independent of $N$, $M$, $K$, and $L$ such that
	\begin{displaymath}
	\frac{1}{\CC_4} \, \Big( 1 + K^{-1+1/p} L^{1/p} M^{1/r} N^{1 - 1/q} \Big) \leq \cc(p,q,r,\SSS) \leq \CC_4 \, \Big( 1 + K^{-1+1/p} L^{1/p} M^{1/r} N^{1 - 1/q} \Big).
	\end{displaymath}
\end{lemma}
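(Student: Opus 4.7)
The plan is to prove the two halves of the double inequality separately, with the upper bound being the technically harder one. The starting observation is that every ball in $\SSS$ is of one of three shapes: a singleton (radius $s\in(0,1]$), a non-trivial ``$2$-ball'' (radius $s\in(1,2]$), or the whole of $S$ (radius $s>2$); consequently $\MM_\SSS f(x)$ is the maximum of three averages. Writing $Tf(x)$ for the average of $f$ over the unique non-trivial $2$-ball centered at $x$ and $f_{\mathrm{avg}}=\|f\|_1/\mu(S)$, we have the pointwise estimate
$$\MM_\SSS f(x)\;\leq\; f(x)+Tf(x)+f_{\mathrm{avg}}.$$
The terms $\|f\|_{p,r}$ and $\|f_{\mathrm{avg}}\|_{p,r}$ are dominated by $\|f\|_{p,q}$ via Fact~\ref{F3} and Fact~\ref{F2} respectively, which together account for the constant $1$ in the claimed bound. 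To control $Tf$ I would split it along $S^\circ$ and $S\setminus S^\circ$. On $S^\circ_k$ the denominator $\mu(B(x,2))=K\alpha_k+\sum_i m_i$ is comparable to $K\alpha_k$ by (iv) and (ii), so the $2$-ball average at $x\in S^\circ_k$ is dominated by $f(x)+\AAA f(x)$; Lemma~\ref{L3} then contributes precisely the factor $K^{-1+1/p}L^{1/p}M^{1/r}N^{1-1/q}$.

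The main technical obstacle is the contribution of $Tf$ on $S\setminus S^\circ$. For $x_{i,j}\in S_i$ the measure of the $2$-ball is $\mu(B(x_{i,j},2))=m_i+\sum_k \mu(S^\circ_k)/h_i$; using (iv)--(vi) and the observation that $\mu(S^\circ_k)$ grows geometrically in $k$, I would show that this is comparable to $\mu(S^\circ_M)/h_i$, which is in turn comparable to $\mu(S)/h_i$. Crudely dominating the numerator by $\int_S f\,\mathrm{d}\mu$ therefore yields the pointwise bound $Tf(x_{i,j})\leq\CC\,h_i\,f_{\mathrm{avg}}$. The non-uniform factor $h_i$ is then absorbed as follows: writing $(Tf)\chi_{S\setminus S^\circ}\leq\CC\,f_{\mathrm{avg}}\,\psi$ with $\psi=\sum_i h_i\chi_{S_i}$, I would apply Fact~\ref{F1} and (iii) to estimate $\|\psi\|_{p,r}\lesssim\sum_i h_i^{1+1/p}m_i^{1/p}\lesssim m_N^p$ (using (ii) so that the sum is dominated by its last term); combined with the Hölder-type inequality $f_{\mathrm{avg}}\leq\CC\,\mu(S)^{-1/p}\|f\|_{p,q}$ and the lower bound $\mu(S)^{1/p}\geq (LK)^{1/p}\alpha_M\geq 2(LK)^{1/p}m_N^p$ coming from (vi) and (iv), the factor $m_N^p/\mu(S)^{1/p}$ is absorbed into a numerical constant, yielding $\|(Tf)\chi_{S\setminus S^\circ}\|_{p,r}\leq\CC\|f\|_{p,q}$ uniformly in $N,M,K,L$.

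For the lower bound I would exhibit the two summands of $1+K^{-1+1/p}L^{1/p}M^{1/r}N^{1-1/q}$ separately. The constant $1$ follows from testing on any singleton indicator $f=\chi_{\{x_0\}}$, since $\MM_\SSS f\geq f$ and hence $\|\MM_\SSS f\|_{p,r}/\|f\|_{p,q}$ is a fixed positive constant depending only on $p,q,r$. The genuine summand is produced by testing on the function $g=\sum_{i=1}^N(h_i m_i)^{-1/p}\chi_{S_i}$ from the proof of Lemma~\ref{L3}: since $g$ vanishes on $S^\circ$, the $2$-ball average at every $x\in S^\circ_k$ reduces to $\sum_i g(\Gamma_i(x))m_i/(K\alpha_k+\sum_i m_i)$, which by the same denominator estimate as in the upper bound is at least $\tfrac{1}{2}\AAA g(x)$. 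Since $\AAA g$ is supported on $S^\circ$, this gives $\|\MM_\SSS g\|_{p,r}\geq\tfrac{1}{2}\|\AAA g\|_{p,r}$, and the sharpness half of Lemma~\ref{L3} completes the lower bound.

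The only genuinely hard step is the $S\setminus S^\circ$ piece of the upper bound, where the $2$-ball is large but still a proper subset of $S$ and the naive bound by $f_{\mathrm{avg}}$ costs the unbounded factor $h_i$; recovering a universal constant requires the precise interplay among the axioms (iii), (iv), and (vi). Everything else reduces to direct applications of Facts~\ref{F1}--\ref{F3} and Lemmas~\ref{L1}--\ref{L3}.
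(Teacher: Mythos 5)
Your proof is correct and follows the paper's overall strategy (decompose $\MM_\SSS$ into the singleton contribution, the part dominated by $\AAA f$ on $S^\circ$, a residual on $S\setminus S^\circ$, and $f_{\rm avg}$, then invoke Facts~\ref{F1}--\ref{F3} and Lemmas~\ref{L1}--\ref{L3}), but you diverge at the step you rightly identify as the technically interesting one: the $2$-ball average on $S\setminus S^\circ$. The paper observes that for $x_{i,j}$, the average over $B(x_{i,j},2)$ is a convex combination of $f(x_{i,j})$ and values $f(x^\circ)$, hence is bounded pointwise by $f(x_{i,j}) + \widetilde{\MM}f(x_{i,j})$ where $\widetilde{\MM}f := \chi_{S\setminus S^\circ}\max_{x^\circ\in S^\circ}f(x^\circ)$; then $\|\widetilde{\MM}f\|_{p,r}\leq\|f\|_{p,r}$ follows in one line from \ref{iv}, since $d_{\widetilde{\MM}f}(t)\leq\mu(S\setminus S^\circ)<\mu(\{x^\circ_0\})\leq d_f(t)$ whenever $t\leq f(x^\circ_0)=\max f|_{S^\circ}$. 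Your route instead bounds the average crudely by $2h_i f_{\rm avg}$, then recovers a uniform constant through the chain $\|\sum_i h_i\chi_{S_i}\|_{p,r}\lesssim\sum_i h_i^{1+1/p}m_i^{1/p}\lesssim m_N^p$ (via \ref{iii}, \ref{ii}), $f_{\rm avg}\lesssim\mu(S)^{-1/p}\|f\|_{p,q}$ (Fact~\ref{F2}), and $\mu(S)^{1/p}\gtrsim(KL)^{1/p}\alpha_M\geq m_N^p$ (via \ref{vi}, \ref{iv}). I checked each link and the chain is sound; in particular $\mu(B(x_{i,j},2))=m_i+\mu(S^\circ)/h_i$ combined with $\mu(S)<2\mu(S^\circ)$ (from \ref{iv}) does give the factor $2h_i$. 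The paper's $\widetilde{\MM}f$ trick is substantially slicker and avoids any bookkeeping of $m_N$, $\alpha_M$, $K$, $L$; your route requires using nearly every axiom \ref{ii}--\ref{vi}, and while it does expose exactly how finely tuned those axioms are, it is considerably more fragile. The lower bound (testing on a single indicator for the constant $1$, and on the Lemma~\ref{L3} extremiser $g=\sum_i(h_im_i)^{-1/p}\chi_{S_i}$ to pick up $K^{-1+1/p}L^{1/p}M^{1/r}N^{1-1/q}$) agrees with the paper, and your remark that $\MM_\SSS g\geq\tfrac12\AAA g$ (using $K\alpha_k+\sum_im_i\leq 2K\alpha_k$) is actually a more careful version of the paper's assertion ``$\MM_\SSS f\geq\AAA f$''.
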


\begin{proof}
First we estimate $\cc(p,q,r,\SSS)$ from above. Take $f \in L^{p,q}(\SSS)$ such that $\|f\|_{p,q} = 1$. It is easy to check that
\begin{displaymath}
\MM_{\SSS} f \leq \max \{f, 4 \AAA f, 2 \widetilde{\MM} f, f_{\rm{avg}}\},
\end{displaymath}
where $\widetilde{\MM} f := \chi_{S \setminus S^\circ} \cdot \max_{x^\circ \in S^\circ} f(x^\circ)$. Therefore, by Fact~\ref{F1}, we have
\begin{displaymath}
\|\MM_{\SSS} f\|_{p,r} \leq 4 \, \CC_\triangle(p,r) \, \Big( \|f\|_{p,r} + \|\AAA f \|_{p,r} + \| \widetilde{\MM} f\|_{p,r} + \| f_{\rm{avg}} \|_{p,r} \Big).
\end{displaymath}
The inequalities $\| \widetilde{\MM} f\|_{p,r} \leq \|f\|_{p,r}$ and $\| f_{\rm{avg}} \|_{p,r} \leq \CC_{\rm{avg}}(p,r) \|f\|_{p,r}$ follows from \ref{iv} and Fact~\ref{F2}, respectively. Combining the above estimates with Lemma~\ref{L3} we conclude that $\|\MM_{\SSS} f\|_{p,r}$ is controlled by
\begin{align*}
4 \, \CC_\triangle(p,r) \, \Big( \CC_{\hookrightarrow}(p,q,r) \big(2 + \CC_{\rm{avg}}(p,r) \big) + \CC_3(p,q,r) \big(K^{-1+1/p} L^{1/p} M^{1/r} N^{1 - 1/q} \big) \Big).
\end{align*}

Now we estimate $\cc(p,q,r,\SSS)$ from below. First, we have $\cc(p,q,r,\SSS) \geq p^{1/r - 1/q} r^{-1/r} q^{1/q}$, since $\|\MM_\SSS g \|_{p,r} = \|g \|_{p,r} =  p^{1/r - 1/q} r^{-1/r} q^{1/q} \, \|g \|_{p,q}$ for $g = \chi_{S}$ (here we use the convention $\infty^{1/\infty} = \infty^{-1/\infty} = 1$, if necessary). Finally, the inequality 
\begin{displaymath}
\cc(p,q,r,\SSS) \geq \frac{1}{\CC_3(p,q,r)} \, K^{-1+1/p} L^{1/p} M^{1/r} N^{1 - 1/q}
\end{displaymath}
is a consequence of Lemma~\ref{L3} and the fact that $\MM_\SSS f \geq \AAA f$ for each $f \in L^{p,q}(\SSS)$.  
\end{proof}

At the end of this section we reformulate the result of the previous lemma in a way that makes it easier to use later on.

\begin{corollary} \label{C1}
	Fix $p \in (1, \infty)$, $\lambda \in (0, \infty)$, and $a, b, \kappa \in \NN$. Let $\SSS_{(p, \lambda, a, b, \kappa)}$ be the test space $\SSS_{p,N,M,K,L}$ with $p$ as above, $N = \kappa^b$, $M = \kappa^a$, and some $K, L$ satisfying $K^{-1 + 1/p} L^{1/p} = \lambda \kappa^{-b}$. Then for each $1 \leq q \leq r \leq \infty$ we have
	\begin{displaymath}
	\frac{1}{\CC_4} \, \Big( 1 + \lambda \kappa^{a/r - b/q}\Big) \leq \cc(p,q,r,\SSS_{(p, \lambda, a, b, \kappa)}) \leq \CC_4 \, \Big( 1 + \lambda \kappa^{a/r - b/q} \Big),
	\end{displaymath}
	where $\CC_4 = \CC_4(p,q,r)$ is the constant from Lemma~\ref{L4}.
\end{corollary}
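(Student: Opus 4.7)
The plan is to reduce the statement to Lemma~\ref{L4} by direct substitution, so the only real content is to verify that the parameter choice prescribed in the corollary is admissible for the test-space construction and that the resulting exponent simplifies as claimed.

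First I would choose the parameters $K \in [1, \infty)$ and $L \in \NN$ witnessing the constraint $K^{-1+1/p} L^{1/p} = \lambda \kappa^{-b}$. Since $p > 1$, the exponent $-1+1/p$ is negative, so for any fixed target value $\lambda \kappa^{-b} > 0$ one may first select $L \in \NN$ large enough that $L^{1/p} \geq \lambda \kappa^{-b}$, and then define $K := \bigl(L^{1/p}/(\lambda \kappa^{-b})\bigr)^{p/(p-1)}$. This $K$ is automatically in $[1, \infty)$ and satisfies the required identity. With $N := \kappa^b$ and $M := \kappa^a$, the quadruple $(p, N, M, L)$ is of the form considered in Section~\ref{S3}, so the associated sequences $(m_i), (h_i), (\alpha_i), (\beta_i)$ satisfying \ref{i}--\ref{vi} exist (see \cite{Ko3}), and hence the test space $\SSS_{p,N,M,K,L}$ is well defined. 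We set $\SSS_{(p,\lambda,a,b,\kappa)} := \SSS_{p,N,M,K,L}$.

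Next I would substitute these choices into the estimate provided by Lemma~\ref{L4}. Using
\begin{displaymath}
K^{-1+1/p} L^{1/p} = \lambda \kappa^{-b}, \qquad M^{1/r} = \kappa^{a/r}, \qquad N^{1-1/q} = \kappa^{b-b/q},
\end{displaymath}
we compute
\begin{displaymath}
K^{-1+1/p} L^{1/p} M^{1/r} N^{1-1/q} = \lambda \kappa^{-b + a/r + b - b/q} = \lambda \kappa^{a/r - b/q}.
\end{displaymath}
Plugging this equality into the two-sided bound of Lemma~\ref{L4} yields the desired double inequality with the same constant $\CC_4 = \CC_4(p,q,r)$, which by that lemma is independent of the concrete values of $N, M, K, L$ and therefore independent of $\lambda, a, b, \kappa$.

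There is no serious obstacle here beyond careful bookkeeping; the only point worth emphasizing is that the admissibility of $K \in [1,\infty)$ and $L \in \NN$ is genuinely flexible (one degree of freedom remains after imposing $K^{-1+1/p} L^{1/p} = \lambda \kappa^{-b}$), so any particular admissible choice serves and none of them affects $\CC_4(p,q,r)$.
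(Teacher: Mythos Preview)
Your proposal is correct and matches the paper's (implicit) approach: the paper states Corollary~\ref{C1} as a direct reformulation of Lemma~\ref{L4} without giving a separate proof, and your argument---verifying that admissible $K\in[1,\infty)$, $L\in\NN$ exist and then substituting $N=\kappa^b$, $M=\kappa^a$, $K^{-1+1/p}L^{1/p}=\lambda\kappa^{-b}$ into the bound of Lemma~\ref{L4}---is exactly what is intended. The extra care you take in exhibiting a concrete choice of $K$ and $L$ is a welcome clarification but does not depart from the paper's reasoning.
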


\section{Composite test spaces} \label{S4}
 In the following two sections by a \textit{composite test space} we mean any metric measure space $\TT$ that arises as a result of applying Proposition~\ref{P1} to a certain family of test spaces introduced in Section~\ref{S3}. This is a bit imprecise, but one can think of composite test spaces as intermediate objects between test spaces and the spaces we want to obtain in Theorem~\ref{T1}. More precisely, these latter ones will be composite test spaces constructed with an aid of a sequence of simpler composite test spaces. We now briefly explain the details of such a construction.
 
 \begin{proposition} \label{P2}
 	Let $(\TT_n)_{n \in \NN}$ be a given sequence of composite test spaces. Then there exists a composite test space $\TT$ such that for each $p \in (1, \infty)$ and $1 \leq q \leq r \leq \infty$ we have
 	\begin{displaymath}
 	\frac{1}{\CC^2} \, \sup_{n \in \NN} \, \cc(p,q,r,\TT_n) \leq \cc(p,q,r,\TT) \leq \CC^2 \,\sup_{n \in \NN} \, \cc(p,q,r,\TT_n),
 	\end{displaymath}  
 	where $\CC = \CC(p,q,r)$ is the constant from Proposition~\ref{P1}. 
 \end{proposition}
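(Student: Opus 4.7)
My plan is to unravel each $\TT_n$ into its constituent test spaces and then re-apply Proposition~\ref{P1} once, globally, to the resulting countable doubly-indexed family. By definition, every composite test space arises from applying Proposition~\ref{P1} to a countable family of test spaces, so for each $n \in \NN$ we can fix a sequence $(\SSS_{n,k})_{k \in \NN}$ of test spaces producing $\TT_n$, and Proposition~\ref{P1} supplies
\begin{displaymath}
\frac{1}{\CC} \sup_{k \in \NN} \cc(p,q,r,\SSS_{n,k}) \leq \cc(p,q,r,\TT_n) \leq \CC \sup_{k \in \NN} \cc(p,q,r,\SSS_{n,k})
\end{displaymath}
with $\CC = \CC(p,q,r)$ as in Proposition~\ref{P1}.

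Since the indexing set $\NN \times \NN$ is countable, I would re-index the doubly-indexed family $(\SSS_{n,k})_{(n,k) \in \NN \times \NN}$ as a single sequence of test spaces and then apply Proposition~\ref{P1} one more time to produce a composite test space $\TT$ satisfying
\begin{displaymath}
\frac{1}{\CC} \sup_{(n,k) \in \NN \times \NN} \cc(p,q,r,\SSS_{n,k}) \leq \cc(p,q,r,\TT) \leq \CC \sup_{(n,k) \in \NN \times \NN} \cc(p,q,r,\SSS_{n,k}).
\end{displaymath}
This is exactly the object promised by the proposition; note that it is automatically non-doubling and is itself a composite test space in the sense introduced at the start of Section~\ref{S4}.

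Combining the two displays finishes the proof. For the upper bound, using the first inequality of the $\TT_n$-display gives $\sup_n \cc(p,q,r,\TT_n) \geq \frac{1}{\CC} \sup_{n,k} \cc(p,q,r,\SSS_{n,k})$, hence the second inequality of the $\TT$-display yields $\cc(p,q,r,\TT) \leq \CC^2 \sup_n \cc(p,q,r,\TT_n)$. For the lower bound, the second inequality of the $\TT_n$-display gives $\sup_n \cc(p,q,r,\TT_n) \leq \CC \sup_{n,k} \cc(p,q,r,\SSS_{n,k})$, and then the first inequality of the $\TT$-display yields $\sup_n \cc(p,q,r,\TT_n) \leq \CC^2 \, \cc(p,q,r,\TT)$.

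There is essentially no obstacle beyond bookkeeping: the only thing to verify is that the resulting double-indexed family is still countable (which is immediate) and that we may invoke Proposition~\ref{P1} with this re-indexed family of test spaces, so that no new ingredients beyond Proposition~\ref{P1} itself and a trivial swap between $\sup_n \sup_k$ and $\sup_{n,k}$ are needed. The cost of this two-step construction is precisely the squaring of the constant $\CC$, which explains the $\CC^2$ appearing in the statement.
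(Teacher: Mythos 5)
Your proof is correct and follows the same route as the paper: unravel each $\TT_n$ into its constituent test spaces, re-index the doubly-indexed countable family, and apply Proposition~\ref{P1} once globally. The paper simply states this more tersely ("It follows directly from Proposition~\ref{P1} that $\TT$ satisfies the desired condition"), whereas you spell out the two-step bookkeeping that produces the $\CC^2$.
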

 
 \begin{proof}
 Note that each space $\TT_n$ is constructed with an aid of some sequence of test spaces, say $\{\SSS_{n,m} : m \in \NN\}$. We let $\TT$ be the space constructed by using Proposition~\ref{P1} for the whole family of test spaces $\{ \SSS_{n,m} : n,m \in \NN \}$. It follows directly from Proposition~\ref{P1} that $\TT$ satisfies the desired condition.  	
 \end{proof}
 
 Now we will construct some composite test spaces for which the associated maximal operators have very specific properties.
 
 \begin{lemma} \label{L5}
 Let $p \in (1, \infty)$, $\gamma \in \RR$, $a, b, R \in \NN$, and $\epsilon > 0$. Then there exists a composite test space $\TT = \TT_{p, \gamma, a, b, R, \epsilon}$ such that for each $1 \leq q \leq r \leq \infty$ we have
 \begin{align*}
  \cc(p,q,r,\TT) = \infty \quad & \textrm{if } \quad a/r - b/q = \gamma,  \\
  \CC_5^{-1} R^{\epsilon d} \leq \cc(p,q,r, \TT) \leq \CC_5 \big( 1 + R^{2\epsilon d} \big) \quad & \textrm{if } \quad a/r - b/q \in (\gamma - 2 \epsilon d, \gamma - \epsilon d), \\
  \cc(p,q,r,\TT) \leq  \CC_5 \quad & \textrm{if } \quad a/r - b/q \leq \gamma - 3 \epsilon d,
 \end{align*} 
 where $d = \sqrt{a^2 + b^2}$ and $\CC_5 = \CC_5(p,q,r)$ is independent of $\gamma$, $a$, $b$, $R$, and $\epsilon$.	
 \end{lemma}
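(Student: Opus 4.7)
The plan is to realize $\TT = \TT_{p,\gamma,a,b,R,\epsilon}$ as a composite test space obtained by applying Proposition~\ref{P1} to a countable family $\{\SSS_n\}_{n \geq 0}$ of test spaces from Section~\ref{S3}. A single \emph{base} space $\SSS_0$ will produce the correct profile of $\cc$ on the intermediate interval, while an \emph{escalating} sequence $\SSS_n$ for $n \geq 1$ will force the blow-up $\cc = \infty$ at the critical exponent $x := a/r - b/q = \gamma$ without polluting either the intermediate or the subcritical region. Concretely, I take
$$
\SSS_0 := \SSS_{(p,\, R^{-\gamma + 3\epsilon d},\, a,\, b,\, R)}, \qquad \SSS_n := \SSS_{(p,\, \kappa_n^{-\gamma + \epsilon d},\, a,\, b,\, \kappa_n)} \quad (n \geq 1),
$$
where $\{\kappa_n\} \subset \NN$ is any sequence with $\kappa_n \nearrow \infty$. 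The requirement of Corollary~\ref{C1} that the $\lambda$-parameter be of the form $K^{-1+1/p}L^{1/p}$ with $K,L \in \NN$ is arranged by replacing the prescribed values with nearby admissible ones differing by at most a $p$-dependent factor, which will ultimately be absorbed into $\CC_5(p,q,r)$.

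Corollary~\ref{C1} then yields a constant $C = C(p,q,r)$ so that
$$
\cc(p,q,r, \SSS_0) \ \text{is comparable to} \ 1 + R^{x - \gamma + 3\epsilon d}, \qquad \cc(p,q,r, \SSS_n) \ \text{is comparable to} \ 1 + \kappa_n^{x - \gamma + \epsilon d},
$$
and Proposition~\ref{P1} gives that $\cc(p,q,r,\TT)$ is in turn comparable (up to a further $p,q,r$-dependent factor) to $\sup_{n \geq 0} \cc(p,q,r, \SSS_n)$. The three estimates of the lemma now follow by inspecting the signs and magnitudes of the two linear exponents. At $x = \gamma$ the quantity $\kappa_n^{\epsilon d}$ tends to infinity with $n$, so $\cc(p,q,r,\TT) = \infty$. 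For $x \in (\gamma - 2\epsilon d, \gamma - \epsilon d)$ one has $x - \gamma + 3\epsilon d \in (\epsilon d, 2\epsilon d)$, so $\SSS_0$ contributes a term in $(R^{\epsilon d}, R^{2\epsilon d})$, while $x - \gamma + \epsilon d < 0$ forces each $\SSS_n$ with $n \geq 1$ to contribute at most $1$; hence the supremum is driven by $\SSS_0$ alone and yields simultaneously the lower bound $R^{\epsilon d}$ and the upper bound $1 + R^{2\epsilon d}$. For $x \leq \gamma - 3\epsilon d$ both exponents are nonpositive, so each component contributes at most $1$ and the supremum is bounded by an absolute constant.

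The main obstacle is the dual role played by the sequence $\{\SSS_n\}_{n \geq 1}$: each such space must simultaneously push $\cc$ toward infinity at the single point $x = \gamma$ and stay uniformly bounded across the intermediate window $(\gamma - 2\epsilon d, \gamma - \epsilon d)$. The choice $\lambda_n = \kappa_n^{-\gamma + \epsilon d}$ resolves this because the linear function $x \mapsto x - \gamma + \epsilon d$ vanishes exactly at the right endpoint $\gamma - \epsilon d$ of the intermediate window, is strictly positive only for $x > \gamma - \epsilon d$ (in particular giving the required divergence $\kappa_n^{\epsilon d} \to \infty$ at $x = \gamma$), and is strictly negative throughout both the intermediate and subcritical regions, where the corresponding $\kappa_n$-powers tend to $0$. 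A secondary, purely technical, difficulty is the integer expressibility of the required $\lambda$-values through the parameters $K,L$ of Corollary~\ref{C1}; since such values can always be approximated up to a factor depending only on $p$, the ensuing multiplicative errors are absorbed into $\CC_5(p,q,r)$.
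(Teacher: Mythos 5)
Your proposal is correct and follows essentially the same route as the paper: apply Proposition~\ref{P1} to a countable family of test spaces $\SSS_{(p,\lambda,a,b,\kappa)}$ and read off $\cc(p,q,r,\TT)$ from the supremum of the component estimates in Corollary~\ref{C1}. The paper uses the single one-parameter family $\kappa_n = R^n$, $\lambda_n = R^{-n\gamma + (n+2)\epsilon d}$; its $n=1$ member coincides with your $\SSS_0$, and the tail $n\geq 2$ plays the same role as your escalating sequence, so the two constructions are the same up to a cosmetic reindexing.
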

 
 \noindent Figure 2 describes the behavior of the function $\cc(p, q, r, \TT)$. We notice that the parameter $d$ appears here only for purely aesthetic reasons (for example, the Euclidean distance between the lines $a/r - b/q = \gamma$ and $a/r - b/q = \gamma - \epsilon d$ equals $\epsilon$).
 
 \begin{figure}[H]
 	\begin{tikzpicture}[
 	axis/.style={very thick, ->, >=stealth'},
 	important line/.style={thick},
 	dashed line/.style={dashed, thin},
 	pile/.style={thick, ->, >=stealth', shorten <=2pt, shorten
 		>=2pt},
 	every node/.style={color=black}
 	]
 	
 	\draw[axis] (0,0)  -- (5.5,0) node(xline)[right]
 	{$1/q$};
 	\draw[axis] (0,0) -- (0,5.5) node(yline)[left] {$1/r$};
 	
 	\draw (0.0,0.0) node[below] {$0$} -- (0.0,0.0);
 	\draw (0,4.5) node[left] {$1$} -- (0.1,4.5);
 	\draw (4.5,0) node[below] {$1$} -- (4.5,0.1);
 	
 	\draw[important line] (0,4.5) -- (4.5,4.5);
 	\draw[important line] (4.5,0) -- (4.5,4.5);
 	
 	\draw[dashed line] (0,0) -- (4.5,4.5);
 	
 	\draw[dashed line] (1.5,0) -- (4.5,4);
 	\draw[dashed line] (1.95,0) -- (4.5,3.4);
 	\draw[dashed line] (2.4,0) -- (4.5,2.8);
 	\draw[dashed line] (2.85,0) -- (4.5,2.2);
 	
 	\draw (4.5,4.5) node[right] {$ q=r $} -- (4.5,4.5);
 	
 	\draw (4.5,4) node[right] {$ a/r-b/q=\gamma $} -- (4.5,4);
 	\draw (4.5,3.4) node[right] {$ a/r-b/q=\gamma - \epsilon d $} -- (4.5,3.4);
 	\draw (4.5,2.8) node[right] {$ a/r-b/q=\gamma - 2 \epsilon d $} -- (4.5,2.8);
 	\draw (4.5,2.2) node[right] {$ a/r-b/q=\gamma - 3 \epsilon d $} -- (4.5,2.2);
 	
 	\draw[important line] (3,2.5) -- (2,3.5);
 	\draw (2,3.5) node[above] {$= \infty$} -- (2,3.5);
 	
 	\draw[important line] (3.3,1.45) -- (5,1);
 	\draw (5,1) node[right] {$\CC_5^{-1} R^{\epsilon d} \leq \cdot \leq \CC_5 (1 + R^{2 \epsilon d})$} -- (5,1);
 	
 	\draw[important line] (4,0.5) -- (1,2);
 	\draw (1,2) node[above] {$\leq \CC_5$} -- (1,2);

 	\end{tikzpicture}
 	\caption{The behavior of the function $\cc(p, q, r, \TT)$.}
 \end{figure}
 
 \begin{proof}
 For each $n \in \NN$ let $\SSS_n$ be the test space $\SSS_{(p, \lambda, a, b, \kappa)}$ from Corollary~\ref{C1} with $p$, $a$, and $b$ as above, $\kappa = R^n$, and $\lambda = R^{- n \gamma + (n+2)\epsilon d}$. We let $\TT$ be the space obtained by using Proposition~\ref{P1} for the sequence $\{ \SSS_n : n \in \NN\}$. We have the following estimates: if
 $a/r - b/q = \gamma$, then
 \begin{displaymath}
 \cc(p,q,r,\TT) \geq \frac{1}{\CC \CC_4} \lim_{n \rightarrow \infty} R^{- n \gamma + (n+2)\epsilon d} R^{n \gamma} = \infty,
 \end{displaymath}  
 if $a/r - b/q \in (\gamma - 2 \epsilon d, \gamma - \epsilon d)$, then
 \begin{displaymath}
 \cc(p,q,r,\TT) \geq \frac{1}{\CC \CC_4} \sup_{n \in \NN} R^{- n \gamma + (n+2)\epsilon d} R^{n (\gamma - 2 \epsilon d)} = \frac{R^{\epsilon d}}{\CC \CC_4}
 \end{displaymath}
 and
 \begin{displaymath}
 \cc(p,q,r,\TT) \leq \CC \CC_4 \sup_{n \in \NN} \Big( 1 + R^{- n \gamma + (n+2)\epsilon d} R^{n (\gamma - \epsilon d)} \Big) \leq \CC \CC_4 \Big( 1 + R^{2\epsilon d} \Big),
 \end{displaymath}
 and, if $a/r - b/q \leq \gamma - 3 \epsilon d$, then
 \begin{displaymath}
 \cc(p,q,r,\TT) \leq \CC \CC_4 \sup_{n \in \NN} \Big( 1 + R^{- n \gamma + (n+2)\epsilon d} R^{n (\gamma - 3 \epsilon d)} \Big) = 2 \CC \CC_4.
 \end{displaymath}
 Therefore, $\TT$ satisfies the desired properties. 
 \end{proof}
 
 At the end of this section we present another result for composite test spaces, which is particularly helpful if the domain of $F$ in Theorem~\ref{T1} is of the form $(\delta, 1]$, or if the domain is of the form $[\delta,1]$, but either $\delta = 1$ or $F$ is not continuous at $\delta$. 
 
%

\begin{lemma} \label{L6}
	Let $p \in (1, \infty)$, $\delta \in [0,1]$, and $\omega \in [0,\delta]$. Then there exists 
	\begin{itemize}
		\item a composite test space $\TT^\leq = \TT^\leq_{p, \delta, \omega}$ such that $\cc(p,q,r, \TT^\leq) < \infty$ if and only if $1/q > \delta$, $r \geq q$ or $1/q = \delta$, $1/r \leq \omega$,
		\item a composite test space $\TT^< = \TT^<_{p, \delta, \omega}$ such that $\cc(p,q,r, \TT^<) < \infty$ if and only if $1/q > \delta$, $r \geq q$ or $1/q = \delta$, $1/r < \omega$.
	\end{itemize}	
\end{lemma}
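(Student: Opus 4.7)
The plan is to construct $\TT^\leq$ and $\TT^<$ by combining, via Proposition~\ref{P2}, two families of composite test spaces: a ``main'' composite $\UU$ produced by Proposition~\ref{P1} from an explicit sequence of test spaces from Corollary~\ref{C1}, which handles the boundedness pattern within $\{1/r \leq 1/q\}$, together with an ``auxiliary'' countable family of Lemma~\ref{L5}--type composites enforcing $\cc(p,q,r,\cdot)=\infty$ in the region $\{1/r > 1/q\}$.

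For the main composite, I would take $\UU$ built from the test spaces $\SSS_n := \SSS_{(p,\lambda_n,1,n,2^n)}$ for $n \in \NN$, with $\lambda_n := \alpha_n \cdot 2^{n^2\delta - n\omega}$, where $\alpha_n := 1$ in the $\TT^\leq$ case and $\alpha_n := n$ in the $\TT^<$ case. Setting $u := 1/q$ and $v := 1/r$, Corollary~\ref{C1} gives $\cc(p,q,r,\SSS_n) \asymp 1 + \alpha_n \cdot 2^{n(v-\omega) + n^2(\delta - u)}$. The quadratic term $n^2(\delta - u)$ dominates for large $n$, so the supremum in $n$ is unbounded exactly when $u < \delta$, or when $u = \delta$ and $v > \omega$; in the $\TT^<$ case, the extra factor $n$ additionally forces unboundedness at the single boundary point $(\delta,\omega)$. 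Proposition~\ref{P1} then lifts this to the same characterisation for $\cc(p,q,r,\UU)$, matching the prescribed allowed region within $\{v \leq u\}$.

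For the auxiliary family, I would include for each $n \in \NN$ the Lemma~\ref{L5} composite $\TT^{(\mathrm{diag})}_n := \TT_{p,\,1/n,\,1,\,1,\,R,\,\epsilon_n}$ with $R \geq 2$ fixed and $\epsilon_n := (3n\sqrt{2})^{-1}$. Here $a = b = 1$ gives $d = \sqrt{2}$, so the safely-below threshold $\gamma - 3\epsilon_n d$ becomes $1/n - 1/n = 0$; Lemma~\ref{L5} then yields $\cc(p,q,r,\TT^{(\mathrm{diag})}_n) \leq \CC_5$ on $\{v \leq u\}$ and $\cc(p,q,r,\TT^{(\mathrm{diag})}_n) = \infty$ on $\{v \geq u + 1/n\}$. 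Combining $\UU$ with all $\TT^{(\mathrm{diag})}_n$ via Proposition~\ref{P2} produces the required $\TT^\leq$ (resp.~$\TT^<$): its forbidden set is the union of $\{v > u\}$, which is collectively covered by the auxiliary walls as $n \to \infty$, and the forbidden region within $\{v \leq u\}$ inherited from $\UU$.

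Two technical points need care. The first is the arithmetic verification that for each $n$ one can choose positive integers $K_n, L_n, N_n, M_n$ meeting the conditions (i)--(vi) behind the test space $\SSS_n$ so that $K_n^{-1+1/p} L_n^{1/p}$ approximates $\lambda_n \kappa_n^{-b_n}$ up to the multiplicative constants absorbed by Corollary~\ref{C1}; this is routine, since one may set $L_n := 1$ and round $K_n$ to a suitable integer, exploiting the density of $\{K^{-(p-1)/p} : K \in \NN\}$ near $0$. The second, more delicate, point is that Propositions~\ref{P1} and~\ref{P2} only assert comparability of $\cc(p,q,r,\cdot)$ for $r \geq q$, so the regime $r < q$ must be treated separately: the auxiliary family $\TT^{(\mathrm{diag})}_n$ is essential here, and one must verify by direct construction of witnessing functions that its presence inside the combined space forces $\cc(p,q,r,\TT^\leq) = \infty$ whenever $v > u$, despite this regime lying outside the scope of Propositions~\ref{P1}--\ref{P2}. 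Both verifications use the techniques (disjoint supports, test functions, atomicity) already developed earlier in the paper.
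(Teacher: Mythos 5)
Your ``main'' composite $\UU$, built directly from the Corollary~\ref{C1} test spaces $\SSS_{(p,\lambda_n,1,n,2^n)}$ with $\lambda_n = \alpha_n 2^{n^2\delta - n\omega}$, is a genuine and arguably leaner alternative to the paper's construction (the paper instead feeds Lemma~\ref{L5} composites with $a_n=n$, $b_n=n^2$, $R=n^n$ into Proposition~\ref{P2}). The arithmetic you sketch for $\sup_n \cc(p,q,r,\SSS_n)$ does indeed isolate the allowed region $\{1/q>\delta\}\cup\{1/q=\delta,\ 1/r\le\omega\}$ (resp.\ $1/r<\omega$) within the half-space $\{1/r\le 1/q\}$, with the extra factor $\alpha_n$ distinguishing $\le$ from $<$ at the corner point; and choosing $K,L$ with $K^{-1+1/p}L^{1/p}$ equal to the target value exactly is possible since $K$ ranges over $[1,\infty)$, so no rounding is even required.

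The gap is in the regime $r<q$, i.e.\ $1/r>1/q$. You correctly observe that Propositions~\ref{P1}--\ref{P2} are only stated for $1\le q\le r\le\infty$, but your proposed fix does not close the gap. The auxiliary walls $\TT^{(\mathrm{diag})}_n$ are Lemma~\ref{L5} composites, and Lemma~\ref{L5} is \emph{also} only stated for $1\le q\le r\le\infty$. The set $\{1/r \ge 1/q + 1/n\}$ on which you invoke the wall's $\cc = \infty$ conclusion lies entirely inside $\{r<q\}$, outside the scope of Lemma~\ref{L5}, so that conclusion is not available. You partially acknowledge this by promising a ``direct construction of witnessing functions,'' but that is precisely the work that is left undone, and the walls themselves do not help you do it. The clean way to handle $r<q$ is Remark~\ref{R1} in Section~\ref{S6}, which the paper's own proof invokes at this point: any space containing an infinite family of pairwise disjoint balls of finite positive measure satisfies $\Omega^p_{\rm HL}(\XX)\subset\{(u,w):u\le w\}$, and every composite test space has such a family by construction. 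In particular $\UU$ \emph{alone} already has $\cc(p,q,r,\UU)=\infty$ for all $r<q$; the auxiliary family $\TT^{(\mathrm{diag})}_n$ is both unjustified and unnecessary. Replace that part of your argument with a citation of Remark~\ref{R1}, and drop the walls entirely.
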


\begin{proof}
	Fix $p \in (1, \infty)$, $\delta \in [0,1]$, and $\omega \in [0,\delta]$. First we construct $\TT^\leq$. For each $n$ take $a_n = n$, $b_n = n^2$, and $\gamma_n$ satisfying $a_n \omega - b_n \delta = \gamma_n - 3 d_n \epsilon_n$, where $d_n = \sqrt{a_n^2 + b_n^2}$ and $\epsilon_n = 1/(3n)$. Let $\TT_n$ be the composite test space $\TT$ from Lemma~\ref{L5} with $p$ as above, $\gamma = \gamma_n$, $a = a_n$, $b = b_n$, $R=n^n$, and $\epsilon = \epsilon_n$. Since $\lim_{n \rightarrow \infty} b_n/a_n = \infty$ and $a_n (\omega+1/n) - b_n \delta > \gamma_n - 2 d \epsilon_n$, it is easy to check that $\TT^\leq$ may be chosen to be the space obtained by using Proposition~\ref{P2} for the sequence of composite test spaces $\{ \TT_n : n \in \NN\}$ (to obtain $\cc(p,q,r, \widetilde{\TT}) = \infty$ for $1/q > \delta$, $r < q$ we use Remark~\ref{R1}, see Section~\ref{S6}). Finally, in order to construct $\TT^<$ we take $a_n = n$, $b_n = n^2$, and $\gamma_n$ satisfying $a_n (\omega-1/n) - b_n \delta = \gamma_n - 3 d_n \epsilon_n$ and $a_n \omega - b_n \delta \in (\gamma_n - 2 d_n \epsilon_n, \gamma_n - d_n \epsilon_n)$, where $d_n$ and $\epsilon_n$ are as before, and then we repeat the previous procedure.
\end{proof}

 \noindent We note that Lemma~\ref{L6} may also be used to construct $\XX$ such that $\Omega^p_{\rm HL}(\XX) = \emptyset$. Indeed, it suffices to take $\TT^<$ with $p$ as above, $\delta = 1$, and $\omega = 0$. 
 
 \section{Proof of Theorem~\ref{T1}} \label{S5}
 
 \noindent \bf Case 1: \rm $F \colon [\delta, 1] \rightarrow [0,1]$, $F$ is continuous at $\delta$. Fix $p \in (1, \infty)$ and $\delta \in [0, 1]$, and take $F \colon [\delta, 1] \rightarrow [0,1]$ concave, non-decreasing, continuous at $\delta$, and such that $F(u) \leq u$ for each $u \in [\delta, 1]$. 
 
 First we construct $\YY$. We can assume that $\delta < 1$, since the case $\delta = 1$ is covered by Lemma~\ref{L6}. Consider the countable set 
 \begin{displaymath}
 \Bigg\{ \Big(\frac{1}{q}, \frac{1}{r}\Big) \in \big( [0,1] \cap \QQ \big)^2 : \Bigg( {\frac{1}{q}} \geq  \delta \ \wedge \ \frac{1}{r} > F\Big(\frac{1}{q}\Big) \Bigg) \ \vee \ \Bigg( \frac{1}{q} < \delta \Bigg) \Bigg\} 
 \end{displaymath}
 and enumerate it to obtain a sequence $\{ P_1, P_2, \dots \}$. Fix $n \in \NN$ and let $P_n = \big( \frac{1}{q_n}, \frac{1}{r_n} \big)$. Since $F$ is concave and non-decreasing, we can choose $\gamma_n \in \RR$, $a_n, b_n \in \NN$, and $\epsilon_n > 0$ such that
 \begin{itemize}
 \item $a_n / r_n - b_n / q_n = \gamma_n$,
 \item if $a_n / r - b_n / q > \gamma_n - 3 \epsilon_n d_n$, then $\frac{1}{q} \geq  \delta$, $\frac{1}{r} > F\big(\frac{1}{q}\big)$ or $\frac{1}{q} < \delta$, where $d_n = \sqrt{a_n^2 + b_n^2}$.
 \end{itemize}
 Let $\TT_n$ be the composite test space $\TT$ from Lemma~\ref{L5} with $p$ as above, $\gamma = \gamma_n$, $a = a_n$, $b = b_n$, $R = 1$, and $\epsilon = \epsilon_n$. It is easy to check that $\YY$ may be chosen to be the space obtained by using Proposition~\ref{P2} for the sequence of composite test spaces $\{ \TT_n : n \in \NN \}$.
 
 Now we construct $\ZZZ$. Again we assume that $\delta < 1$, since the case $\delta = 1$ is covered by Lemma~\ref{L6}. For each $n \in \NN$ and $u \in [\delta, 1]$ we choose $\gamma_{n,u} \in \RR$ and $a_{n,u}, b_{n,u} \in \NN$ such that
 \begin{itemize}
 	\item $\gamma_{n,u} - 2 d_{n,u} / n < a_{n,u} u - b_{n,u} F(u) < \gamma_{n,u} - d_{n,u} / n$, where $d_{n,u} = \sqrt{a_{n,u}^2 + b_{n,u}^2}$,
 	\item if $a_{n,u} / r - b_{n,u} / q \geq \gamma_{n,u} - d_{n,u} / n$, then $\frac{1}{q} \geq \delta$, $\frac{1}{r} > F\big(\frac{1}{q}\big)$ or $\frac{1}{q} < \delta$. 
 \end{itemize}
 Let $\TT_{n,u}$ be the composite test space $\TT$ from Lemma~\ref{L5} with $p$ as above, $\gamma = \gamma_{n,u}$, $a = a_{n,u}$, $b = b_{n,u}$, $R = n^n$, and $\epsilon = 1/n$. Fix $n \in \NN$ and observe that for each $u \in [\delta, 1]$ the set
 \begin{displaymath}
 E_{n,u} = \Big\{ v \in [\delta, 1] : \gamma_{n,u} - 2 d_n / n < av - bF\ref{v} < \gamma_{n,u} - d_n/n \Big \}
 \end{displaymath} 
 is open in $[\delta, 1]$ with its natural topology. Thus $\{ E_{n,u} : u \in [\delta, 1] \}$ is an open cover of $[\delta, 1]$ and we can find a finite subset $U_n \subset [\delta, 1]$ such that $\bigcup_{u \in U_n} E_{n,u} = [\delta, 1]$. Finally, we let $\ZZZ$ be the space obtained by using Proposition~\ref{P2} for the family $\{ \TT_{n,u} : n \in \NN, u \in U_n\}$. We will show that $\ZZZ$ satisfies the desired properties. Fix $u_0 \in [\delta, 1]$ and observe that for each $n \in \NN$ there exists $u_n \in U_n$ such that $u_0 \in E_{n, u_n}$. Therefore, in view of Lemma~\ref{L5},
 \begin{displaymath}
 \cc(p, 1/u_0, 1/ F(u_0), \ZZZ) \geq \frac{1}{\CC^2} \cc(p, 1/u_0, 1/F(u_0), \TT_{n,u_n}) \geq \frac{1}{\CC^2} n^{d_{n,u}}.
 \end{displaymath}
 Since $n$ is arbitrary and $d_{n,u} \geq 1$, we conclude that  $\cc(p, 1/u_0, 1/ F(u_0), \ZZZ) = \infty$ and, as a~result, we obtain $ \cc(p, q, r, \ZZZ) = \infty$ if $\frac{1}{q} \geq \delta$, $\frac{1}{r} \geq F\big(\frac{1}{q}\big)$ or $\frac{1}{q} < \delta$. Now let us consider a~pair $(q,r)$ satisfying $\frac{1}{q} \geq \delta$, $\frac{1}{r} < F\big(\frac{1}{q}\big)$. Then we have
 \begin{displaymath}
 d(q,r,F) := \min \Bigg\{ d_e\Bigg( \Big( \frac{1}{q}, \frac{1}{r} \Big), \Big(u, F(u) \Big) \Bigg) : u \in [\delta, 1] \Bigg\} > 0,
 \end{displaymath}
 where $d_e$ is the standard Euclidean metric on the plane. Observe that for each $n \in \NN$ and $u \in U_n$ we have the following implication
 \begin{displaymath}
 a_{n,u} / r - b_{n,u} / q > \gamma_{n,u} - 3 d_{n,u} / n \implies d(q,r,F) \leq 2/n.
 \end{displaymath}
 Hence if $n > 2 / d(q,r,F)$, then for each $u \in U_n$ we have $a_{n,u} / r - b_{n,u} / q \leq \gamma_{n,u} - 3 d_{n,u} / n$, which implies $\cc(p,q,r,\TT_{n,u}) \leq \CC_5$. Finally, since for each of the finitely many pairs $(n,u)$ satisfying $n \leq 2 / d(q,r,F)$ and $u \in U_n$ there is $\cc(p,q,r,\TT_{n,t}) < \infty$, we conclude that $\cc(p,q,r, \ZZZ) < \infty$.
 
 \smallskip \noindent \bf Case 2: \rm $F \colon [\delta, 1] \rightarrow [0,1]$, $F$ is not continuous at $\delta$. Fix $p \in (1, \infty)$ and $\delta \in (0, 1)$, and take $F \colon [\delta, 1] \rightarrow [0,1]$ concave, non-decreasing, satisfying $F(\delta) = \omega < \lim_{u \rightarrow \delta} F(u)$ for some $\omega \in [0,\delta)$, and such that $F(u) \leq u$ for each $u \in [\delta, 1]$. Let $\widetilde{F}$ be the continuous modification of $F$, that is, $\widetilde{F}(u)=F(u)$ for $u \in (\delta, 1)$ and $\widetilde{F}(\delta) = \lim_{u \rightarrow \delta} F(u)$. Then $\widetilde{F}$ satisfies the conditions specified in Case 1. Let $\widetilde{\YY}$ (resp. $\widetilde{\ZZZ}$) be the space obtained in Case 1 for $\widetilde{F}$. We also let $\widetilde{\TT}$ be the composite test space $\TT^\leq$ (resp. $\TT^<$) from Lemma~\ref{L6} with $p$, $\delta$, and $\omega$ as above. It is easy to check that $\YY$ (resp. $\ZZZ$) may be chosen to be the space obtained by using Proposition~\ref{P2} for $\widetilde{\YY}$ (resp. $\widetilde{\ZZZ}$) and countably many copies of $\widetilde{\TT}$.

 \smallskip \noindent \bf Case 3: \rm $F \colon (\delta, 1] \rightarrow [0,1]$. Fix $p \in (1, \infty)$ and $\delta \in [0,1)$, and take $F \colon (\delta, 1] \rightarrow [0,1]$ concave, non-decreasing and such that $F(u) \leq u$ for each $u \in (\delta,1]$. We extend $F$ to $\widetilde{F} \colon [\delta, 1] \rightarrow [0,1]$, setting $\widetilde{F}(\delta) = \lim_{u \rightarrow \delta} F(u)$. Then $\widetilde{F}$ satisfies the conditions specified in Case 1. Let $\widetilde{\YY}$ and $\widetilde{\ZZZ}$ be the spaces obtained in Case 1 for $\widetilde{F}$. We also let $\widetilde{\TT}$ be the composite test space $\TT^<$ from Lemma~\ref{L6} with $p$ and $\delta$ as above, and $\omega = 0$. It is easy to check that $\YY$ (resp. $\ZZZ$) may be chosen to be the space obtained by using Proposition~\ref{P2} for $\widetilde{\YY}$ (resp. $\widetilde{\ZZZ}$) and countably many copies of $\widetilde{\TT}$.
 
\section{Necessary conditions} \label{S6}
 

In the last section we briefly discuss why there are no alternatives for the shape of $\Omega^p_{\rm HL}(\XX)$ other than those mentioned in Theorem~\ref{T0}. We begin with the following simple observation.

\begin{remark} \label{R0}
	Fix $p \in (1, \infty)$ and let $\XX$ be an arbitrary metric measure space. If $(u,w) \in \Omega^p_{\rm HL}(\XX)$, then $[u,1] \times [w,1] \subset \Omega^p_{\rm HL}(\XX)$.
\end{remark}

\noindent Indeed, this follows by the fact that the Lorentz spaces $L^{p,q}(\XX)$ increase as the parameter $q$ increases.

\smallskip \noindent By Remark~\ref{R0} we know that either $\Omega^p_{\rm HL}(\XX)$ is empty or it consists of points lying under the graph of some non-decreasing function, say $F$, and the domain of $F$ is of the form $[\delta, 1]$ or $(\delta, 1]$ for some $\delta \in [0,1]$ or $\delta \in [0,1)$, respectively. More precisely, for each $u$ from the domain of $F$ we have $(u,w) \in \Omega^p_{\rm HL}(\XX)$ for $w < F(u)$ and $(u,w) \notin \Omega^p_{\rm HL}(\XX)$ for $w > F(u)$ (here we do not focus on whether $(u,F(u))$ belongs to $\Omega^p_{\rm HL}(\XX)$ or not, except for the case $F(u) = 0$, which forces that the first option actually takes place).   

Remark~\ref{R1} below, in turn, explains why the assumption $F(u) \leq u$ is needed. In what follows by $\rm{supp}(\mu)$ we mean the {\it support} of a given measure $\mu$, that is, the set
$$
{\rm supp}(\mu):= \{ x \in X : \mu(B(x,s)) > 0 \ {\rm for} \ {\rm all} \ s>0\}.
$$  

\begin{remark} \label{R1}
	Let $\XX$ be a metric measure space. Assume that there exists an infinite family $\BB$ of pairwise disjoint balls $B$ satisfying $0 < \mu(B) < \infty$. Then for each $p \in (1, \infty)$ we have $\Omega^p_{\rm HL}(\XX) \subset \{(u,w) \in [0,1]^2 : u \leq w \}$.    
\end{remark}

\noindent Indeed, fix $p \in (1, \infty)$ and $1 \leq r < q < \infty$ (the case $q = \infty$ can be considered very similarly). Let $n_0 \in \NN$. We can find a sequence of pairwise disjoint sets $\{E_n : n = 1, \dots, n_0\}$ with the following properties:
\begin{itemize}
	\item each $E_n$ is a union of finitely many elements from $\BB$,
	\item $\mu(E_n) \geq \mu(E_1 \cup \dots \cup E_{n-1})$ for each $n \in \{2, \dots, n_0\}$. 
\end{itemize}
Consider $g_{n_0} \in L^{p,q}(\XX)$ defined by
\begin{displaymath}
g_{n_0} := \sum_{n=1}^{n_0} n^{-\frac{2}{q+r}} \mu(E_n)^{-1/p} \chi_{E_n}.
\end{displaymath}
By Lemma~\ref{L1} the following estimates hold 
\begin{displaymath}
\|g_{n_0}\|_{p,q} \leq \CC_1(p,q) \Big(  \frac{p}{q}\Big)^{1/q} \Big( \sum_{n=1}^{n_0} n^{-\frac{2q}{q+r}} \Big)^{1/q}, \quad
\|g_{n_0}\|_{p,r} \geq \frac{1}{\CC_1(p,r)} \Big(  \frac{p}{r}\Big)^{1/r} \Big( \sum_{n=1}^{n_0} n^{-\frac{2r}{q+r}} \Big)^{1/r}.
\end{displaymath}
Observe that for each $x \in \rm{supp}(\mu)$ we have $\MM_\XX g(x) \geq g(x)$. Since $2r/(q+r) < 1 < 2q/(q+r)$, we obtain
$ 
\lim_{n_0 \rightarrow \infty} \frac{\|g_{n_0}\|_{p,r}}{\|g_{n_0}\|_{p,q}} = \infty
$
and, consequently, $(\frac{1}{q}, \frac{1}{r}) \notin \Omega^p_{\rm HL}(\XX)$. 

\smallskip \noindent
One additional comment should be made here. Namely, if $\BB$ from Remark~\ref{R1} does not exist, then there is only finitely many points $x \in \rm{supp}(\mu)$ such that $\mu(B(x,s_x)) < \infty$ for some $s_x > 0$. In this case $\Omega^p_{\rm HL}(\XX) = [0,1]^2$ holds trivially for each $p \in (1, \infty)$.

Finally, the fact that $\Omega^p_{\rm HL}(\XX)$ is convex, and hence $F$ must be concave, is justified by the following interpolation argument. 

\begin{theorem} \label{T2}
	Fix $p \in [1, \infty)$, $1 \leq q_0 \leq q_1 \leq \infty$, and $1 \leq r_0, r_1 \leq \infty$ such that $q_i \leq r_i$ for $i \in \{0, 1\}$. Let $\XX$ be a metric measure space and assume that the associated maximal operator $\MM_\XX$ is bounded from $L^{p, q_i}(\XX)$ to $L^{p, r_i}(\XX)$ for $i \in \{0, 1\}$. Then for each $\theta \in (0,1)$ the operator $\MM_\XX$ is bounded from $L^{p, q_\theta}(\XX)$ to $L^{p, r_\theta}(\XX)$, where
	\begin{displaymath}
	\frac{1}{q_\theta} = \frac{1-\theta}{q_0} + \frac{\theta}{q_1}, \qquad \frac{1}{r_\theta} = \frac{1-\theta}{r_0} + \frac{\theta}{r_1}.
	\end{displaymath}
\end{theorem}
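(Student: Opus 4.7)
I would execute a Marcinkiewicz-style level-set interpolation adapted to the setting in which the first index $p$ is fixed and only the second Lorentz parameter varies. The trivial cases $q_0 = q_1$ and $r_0 = r_1$ follow immediately from Fact~\ref{F3} (the embedding $L^{p,q}(\XX) \hookrightarrow L^{p,r}(\XX)$ for $q \leq r$) by composing one of the two hypotheses with a suitable inclusion, exploiting the orderings $q_0 \leq q_\theta \leq q_1$ and $\min(r_0, r_1) \leq r_\theta \leq \max(r_0, r_1)$. So I would assume $q_0 < q_1$ and, by focusing on $|f|$, that $f \geq 0$ throughout the main argument.

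\textbf{Splitting and reduction.} For each $t > 0$ I would split $f = g_t + h_t$ at a level $\lambda(t)$ to be chosen, with $g_t := \min(f, \lambda(t))$ and $h_t := (f - \lambda(t))_+$. Sublinearity of $\MM_\XX$ yields
$$d_{\MM_\XX f}(2t) \leq d_{\MM_\XX g_t}(t) + d_{\MM_\XX h_t}(t),$$
and combining each hypothesis with the embedding $L^{p, r_i}(\XX) \hookrightarrow L^{p, \infty}(\XX)$ from Fact~\ref{F3} produces the weak-type bounds $d_{\MM_\XX g_t}(t) \leq C t^{-p} \|g_t\|_{p, q_0}^p$ and $d_{\MM_\XX h_t}(t) \leq C t^{-p} \|h_t\|_{p, q_1}^p$ (with the bounded piece paired to the more restrictive index $q_0$ and the tail piece to the weaker index $q_1$). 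Inserting these into $\|\MM_\XX f\|_{p, r_\theta}^{r_\theta} \sim \int_0^\infty t^{r_\theta - 1} d_{\MM_\XX f}(2t)^{r_\theta/p}\, dt$ and applying $(a + b)^\alpha \lesssim a^\alpha + b^\alpha$, the desired boundedness reduces to the pair of Hardy-type estimates
$$\int_0^\infty t^{-1} \|g_t\|_{p, q_0}^{r_\theta}\, dt \ + \ \int_0^\infty t^{-1} \|h_t\|_{p, q_1}^{r_\theta}\, dt \ \lesssim \ \|f\|_{p, q_\theta}^{r_\theta}.$$

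\textbf{Hardy step and main obstacle.} Using the rearrangement identities $g_t^* = f^* \wedge \lambda(t)$ and $h_t^* = (f^* - \lambda(t))_+$ together with the formula $\|g\|_{p,q}^q \sim \int_0^\infty (s^{1/p} g^*(s))^q \frac{ds}{s}$, each inner Lorentz norm rewrites as an integral of $\varphi(s) := s^{1/p} f^*(s)$ truncated at $\lambda(t)$. Choosing $\lambda(t)$ so that $d_f(\lambda(t))$ is comparable to an appropriate power of $t$, the substitution $u = d_f(\lambda(t))$ converts the outer $t$-integrals into single integrals in $u$ against $\varphi$, which a classical Hardy-type inequality — with exponents aligned by the convex-combination identity $1/q_\theta = (1-\theta)/q_0 + \theta/q_1$ — collapses to $\bigl(\int_0^\infty \varphi(s)^{q_\theta} \frac{ds}{s}\bigr)^{r_\theta / q_\theta} \sim \|f\|_{p, q_\theta}^{r_\theta}$. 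The principal difficulty lies in this Hardy step: selecting $\lambda(t)$ correctly so that the two pieces balance, verifying the exponent arithmetic through the convex-combination identity, and handling the endpoint contingencies $q_0 = 1$, $q_1 = \infty$, or $r_\theta = \infty$, in which the integral representation of the Lorentz norm degenerates into a supremum and the Hardy inequality must be replaced by its $L^\infty$-endpoint form.
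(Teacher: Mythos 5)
Your reduction to $q_0 < q_1$ and $r_0 < r_1$ via Fact~\ref{F3} is fine, and your overall template (level-set splitting, weak-type bounds via embedding into $L^{p,\infty}$, a Hardy step) matches the paper's strategy in outline. However, the decomposition you choose is the wrong one, and the gap is fatal rather than technical.

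You split $f = g_t + h_t$ by truncating $f$ \emph{in height}: $g_t = \min(f,\lambda(t))$, $h_t = (f-\lambda(t))_+$. This is the correct decomposition for Marcinkiewicz interpolation between distinct \emph{first} indices $p_0 < p_1$, where one exploits the pointwise comparison $g_t^{p_1} \le \lambda^{p_1-p_\theta} f^{p_\theta}$ and $h_t^{p_0} \le \lambda^{p_0-p_\theta} f^{p_\theta}$. But here $p$ is fixed and only the second Lorentz index varies, and nothing forces $g_t \in L^{p,q_0}$ or $h_t \in L^{p,q_1}$ when $f \in L^{p,q_\theta}$. Writing $\varphi(s)=s^{1/p}f^*(s)$ and $m(t)=d_f(\lambda(t))$, one has $s^{1/p}g_t^*(s) = \lambda(t)\,s^{1/p}$ for $s<m(t)$ and $=\varphi(s)$ for $s>m(t)$; this is \emph{not} a truncation of $\varphi$, and
\[
\|g_t\|_{p,q_0}^{q_0} \;\gtrsim\; \int_{m(t)}^{\infty} \varphi(s)^{q_0}\,\frac{ds}{s}.
\]
Take, say, $q_0=2$, $q_1=\infty$, $\theta=\tfrac12$, $q_\theta=4$, and $f$ on $\mathbb{R}^n$ with $f^*(s)=s^{-1/p}\big(1+|\log s|\big)^{-1/3}$, so that $\varphi(s)=(1+|\log s|)^{-1/3}$. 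Then $\int_0^\infty \varphi^{4}\,\frac{ds}{s}<\infty$, so $f\in L^{p,4}$, yet $\int_{m}^{\infty}\varphi^{2}\,\frac{ds}{s}=\infty$ for every $m<\infty$, so $g_t\notin L^{p,2}$ for any $t$, and the weak-type bound you want to apply to $g_t$ is vacuous. (Swapping the pairing, $h_t\leftrightarrow q_0$, fails for the same example, since $\int_0^m\varphi^{2}\,\frac{ds}{s}=\infty$ as well.) Your Hardy step then sums divergent integrals, and no choice of $\lambda(t)$ repairs this, because the divergence is caused by the tail of $\varphi$ at one of the endpoints, not by the location of the split.

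The paper avoids this by truncating a different quantity. It sets $\SSSS f(n):=2^n d_f(2^n)^{1/p}$, so that $\|f\|_{p,q}\sim\|\SSSS f\|_{\ell^q(\mathbb{Z})}$, and decomposes $f=f_0^\lambda+f_1^\lambda$ according to the super-level set $N_\lambda=\{n:\SSSS f(n)>\lambda\}$ of the \emph{sequence} $\SSSS f$, i.e.\ one truncates in the height of $\SSSS f$, not of $f$. With this choice $\SSSS f_0^\lambda \le \min(\lambda,\SSSS f)$ pointwise and $\SSSS f_1^\lambda \le \SSSS f\cdot\chi_{\mathbb{Z}\setminus N_\lambda}$, and since $N_\lambda$ is finite whenever $\SSSS f\in\ell^{q_\theta}$, both pieces automatically land in $\ell^{q_0}$ and $\ell^{q_1}$. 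After this discretization the argument really is Zygmund's classical proof of Marcinkiewicz interpolation, but run on $\ell^q(\mathbb{Z})$ rather than on $L^q$ of the underlying space. To salvage your proposal you would need to replace the height truncation of $f$ by a decomposition of $f$ into dyadic layers grouped according to the size of $2^n d_f(2^n)^{1/p}$ (or, equivalently, of $\varphi$), which is precisely what the paper does.
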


\noindent We explain briefly how Theorem~\ref{T2} can be inferred from the general theory of interpolation. We begin with the comment that Lorentz spaces in this context were considered for the first time by Hunt in \cite{Hu}. However, the theorem formulated there does not cover Theorem~\ref{T2}. Hence, we are forced to refer to the literature where some more advanced interpolation methods are developed. The appropriate variant of Theorem~\ref{T2} for linear operators can be directly deduced from \cite[Theorem 5.3.1]{BL} (see also \cite{Ma}, where the $K$-functional for the couple $(L^{p,q_0}, L^{p,q_1})$ is computed). Then, a suitable linearization argument (see \cite{Ja}, for example) allows us to extend this result to the class of sublinear operators and thus the maximal operator $\MM_\XX$ is also included.   

Although there are several ways to deduce Theorem~\ref{T2} from the theorems that appear in the literature, each of them, to the author's best knowledge, requires a deep understanding of the interpolation theory. As the author found an elegant, elementary proof of Theorem~\ref{T2}, he decided to present it in the Appendix. 

The last issue we want to mention is the boundary problem. Namely, in Theorem~\ref{T1} we assume that 
$$
\partial\Omega^p_{\rm HL}(\XX) \subset \Omega^p_{\rm HL}(\XX) 
\quad {\rm or} \quad
\partial\Omega^p_{\rm HL}(\XX) \cap \Omega^p_{\rm HL}(\XX) = \emptyset
$$
(here $\partial\Omega^p_{\rm HL}(\XX)$ denotes the boundary of $\Omega^p_{\rm HL}(\XX)$ considered as a subset of $[0,1]^2$). It is natural to ask whether there are any other options except the two mentioned above. In fact, Proposition~\ref{P2} combined with Lemma~\ref{L5} and Lemma~\ref{L6} can provide a wide range of different possibilities. For example, if $F$ from Theorem~\ref{T1} is strictly concave, then for a~given set $E \subset [\delta, 1]$ such that $\overline{E}$ is countable we can find $\XX$ such that $\MM_\XX$ is bounded from $L^{p, 1/u}(\XX)$ to $L^{p, 1/F(u)}(\XX)$ if and only if $u \notin E$. Nevertheless, it is probably very difficult to describe precisely all forms that the intersections $\partial\Omega^p_{\rm HL}(\XX) \cap \Omega^p_{\rm HL}(\XX)$ can take.

\section*{Appendix. Proof of Theorem~\ref{T2}} \label{SA}

Here we give an elementary proof of Theorem~\ref{T2}. In what follows the operator is specified to be $\MM_\XX$, but one can also replace it with, for example, any operator $\HH$ satisfying the following assertions:
\begin{enumerate}[label=(\Alph*)]
	\item $0 \leq f_1 \leq f_2 \implies 0 \leq \HH f_1 \leq \HH f_2$,
	\item $|\HH f| \leq \HH |f|$,
	\item $\HH(|f_1| + |f_2|) \leq C_\HH (\HH |f_1| + \HH |f_2|)$.
\end{enumerate}   

	First we observe that it suffices to consider the case $q_0 < q_1$ and $r_0 < r_1$. Indeed, in each of the remaining cases the thesis is an easy consequence of Fact~\ref{F3}.
	Fix $\theta \in (0,1)$ and let $\CC_\rightarrow$ be such that
	\begin{displaymath}
	\|\MM_\XX g \|_{p, r_i} \leq \CC_\rightarrow \| g \|_{p, q_i}, \qquad g \in L^{p, q_i}(\XX), \ i \in \{0,1\}.
	\end{displaymath}
	Our aim is to obtain the inequality
	\begin{equation}\label{A1}
	\|\MM_\XX g \|_{p, r_\theta} \leq C \, \| g \|_{p, q_\theta},
	\end{equation}
	for each $g \in L^{p, q_\theta}(\XX)$ with some $C$ independent of $g$.
	For any measurable function $g \colon X \rightarrow \CCC$ we introduce $\SSSS g, \TTTT g \colon \ZZ \rightarrow [0, \infty]$ given by
	\begin{displaymath}
	\SSSS g(n) := 2^n d_g(2^n)^{1/p}, \qquad n \in \ZZ, 
	\end{displaymath}
	and 
	\begin{displaymath}
	\TTTT g(n) := \SSSS\MM_\XX g(n) = 2^n d_{\MM_\XX g}(2^n)^{1/p}, \qquad n \in \ZZ.
	\end{displaymath}
	We observe that for each $q \in [1, \infty]$ there is a numerical constant $\CC_\square(p,q)$ such that
	\begin{displaymath}
	\frac{1}{\CC_\square(p,q)} \, \| \SSSS g \|_{q} \leq \|g\|_{p,q} \leq \CC_\square(p,q) \, \| \SSSS g \|_{q}, \qquad g \in L^{p, q}(\XX),
	\end{displaymath}
	where $\|  \cdot  \|_q$ denotes the standard norm on $\ell^q(\ZZ)$. Let
	\begin{displaymath}
	\CC_\square = \max \{\CC_\square(p,q_0), \CC_\square(p,q_\theta), \CC_\square(p,q_1), \CC_\square(p,r_0), \CC_\square(p,r_\theta), \CC_\square(p,r_1)\}.
	\end{displaymath}
	Thus for each $i \in \{ 0, 1\}$ we have
	\begin{equation}\label{A2}
	\| \TTTT g \|_{r_i} \leq \CC_\square^2 \, \CC_\rightarrow \, \|\SSSS g \|_{q_i}
	\end{equation}
	and our aim is to obtain the inequality
	\begin{equation}\label{A3}
	\| \TTTT g \|_{r_\theta} \leq C' \, \|\SSSS g \|_{q_\theta},
	\end{equation}
	which would imply \eqref{A1} with $C = C' \, \CC_\square^2$. 
	
	In order to deduce \eqref{A3} from \eqref{A2} we follow the classical proof of Marcinkiewicz interpolation theorem for operators acting on Lebesgue spaces (see \cite[Theorem 1]{Z}). It turns out that this strategy can be successfully applied but we must take into account certain additional difficulties. Namely, our ``operator'' should be the transformation $\SSSS g \mapsto \TTTT g$. However, this operation cannot be considered as a well defined operator, because there are usually many different functions with the same distribution function. Thus, we proceed with the details.
	
	Assume that $r_1 < \infty$ and fix $f \in L^{p,q_\theta}(\XX)$ satisfying $f \geq 0$. For each $\lambda > 0$ we introduce the set $N_\lambda = \{n \in \ZZ : \SSSS f > \lambda \}$. Observe that either $N_\lambda = \emptyset$ or $N_\lambda$ consists of finitely many elements $n_1 > \ldots > n_m$, $m \in \NN$. For each $j \in \ZZ$ let $E_j = \{x \in X  : f(x) \geq 2^j \}$. If $N_\lambda = \emptyset$, then we let $f_0^\lambda = 0$ and $f_1^\lambda = f$. Otherwise, if $N_\lambda \neq \emptyset$, we define
	\begin{displaymath}
	f_0^\lambda = f \cdot \big( \chi_{E_{n_1}} + \sum_{k=2}^{m} \chi_{E_{n_k} \setminus E_{n_{k-1}}} \big), \qquad f_1^\lambda = f \cdot \sum_{j \in \ZZ \setminus N_\lambda} \chi_{E_{n_j} \setminus E_{n_{j-1}}}.
	\end{displaymath}
	Notice that $f \leq f_0^\lambda + f_1^\lambda$ and hence $\MM_\XX f \leq \MM_\XX f_0^\lambda + \MM_\XX f_1^\lambda$. Moreover, we have
	\begin{displaymath}
	\SSSS f_0^\lambda(n) = \SSSS f(n) > \lambda, \qquad n \in N_\lambda,
	\end{displaymath}
	and
	\begin{displaymath}
	\SSSS f_0^\lambda(n) \leq \min\{ \lambda, \SSSS f(n)\}, \qquad n \in \ZZ.
	\end{displaymath}
	Let $(\SSSS f)^\lambda_0 = \SSSS f \cdot \chi_{N_\lambda}$ and $(\SSSS f)^\lambda_1 = \SSSS f \cdot \chi_{\ZZ \setminus N_\lambda}$. Then it is not hard to check that
	\begin{equation}\label{geom}
	\| \SSSS f^\lambda_i \|_{q_i} \leq \big( 1 + 2^{-q_i/p} + 4^{-q_i/p} + \ldots \big)^{1/q_i} \, \|(\SSSS f)^\lambda_i \|_{q_i}.
	\end{equation} 
	
	Next we study the distribution functions of $(\SSSS f)^\lambda_i$, $i \in \{0,1\}$, more carefully. Observe that we have $d_{(\SSSS f)^\lambda_0}(y) \leq d_{\SSSS f}(\lambda)$ for $0 < y < \lambda$ and $d_{(\SSSS f)^\lambda_0}(y) \leq d_{\SSSS f}(y)$ for $y \geq \lambda$. Hence, combining the above estimates and the fact that $d_{(\SSSS f)^\lambda_0}$ is non-increasing with the equality
	\begin{displaymath}
	2^{q_0} \int_{0}^{\lambda/2} y^{q_0-1} \, {\rm d}y = \int_{0}^{\lambda} y^{q_0-1} \, {\rm d}y,
	\end{displaymath}
	we conclude that
	\begin{equation}\label{d0}
	\int_{0}^\infty y^{q_0-1} d_{(\SSSS f)^\lambda_0}(y) \, {\rm d}y \leq \frac{2^{q_0}}{2^{q_0}-1} \int_{\lambda/2}^\infty y^{q_0-1} d_{\SSSS f}(y) \, {\rm d}y \leq 2^{q_0} \int_{\lambda/4}^\infty (y - \lambda/4)^{q_0-1} d_{\SSSS f}(y) \, {\rm d}y.
	\end{equation}
	Similarly, we note that $d_{(\SSSS f)^\lambda_1}(y) \leq d_{\SSSS f}(y)$ for $0 < y < \lambda$ and $d_{(\SSSS f)^\lambda_0}(y) = 0$ for $y \geq \lambda$, which gives
	\begin{equation}\label{d1}
	\int_{0}^\infty y^{q_1-1} d_{(\SSSS f)^\lambda_1}(y) \, {\rm d}y \leq \int_{0}^\lambda y^{q_0-1} d_{\SSSS f}(y) \, {\rm d}y \leq 2^{2q_0} \int_{0}^{\lambda/4} y^{q_0-1} d_{\SSSS f}(y) \, {\rm d}y.
	\end{equation}
	
	Now we turn our attention to $\TTTT f$. Fix $y > 0$ and $\lambda = \lambda(y)$ (which will be specified later on), and notice that $\MM_\XX f \leq \MM_\XX f_0^\lambda + \MM_\XX f_1^\lambda$ implies $\TTTT f(n) \leq 2^{1/p} (\TTTT f_0^\lambda(n-1) + \TTTT f_1^\lambda(n-1))$ for each $n \in \NN$. Hence
	\begin{equation}\label{d2}
	d_{\TTTT f}(y) \leq d_{\TTTT f_0^\lambda}(y/2^{1/p}) + d_{\TTTT f_1^\lambda}(y/2^{1/p}).
	\end{equation}
	By the hypothesis we have
	\begin{equation}\label{d3}
	d_{\TTTT f_i^\lambda}(y/2^{1/p}) \leq 2^{r_i/p} \, \frac{\|\TTTT f_i^\lambda \|_{r_i}^{r_i}}{y^{r_i}} \leq \big( 2^{1/p} \CC_\square^2 \CC_\rightarrow \big)^{r_i} \, \frac{\|\SSSS f_i^\lambda \|_{q_i}^{r_i}}{y^{r_i}}. 
	\end{equation}
	Therefore, combining \eqref{geom}, \eqref{d0}, \eqref{d1}, \eqref{d2}, and \eqref{d3} gives
	\begin{align*}
	\| \TTTT f \|_{r_\theta}^{r_\theta} & = r_\theta \int_{0}^\infty y^{r_\theta - 1} d_{\TTTT f}(y) \, {\rm d}y \\ & \leq \widetilde{C} \, \Big( \int_0^\infty y^{r_\theta-r_0-1} \, \Big( \int_{\lambda(y)/4}^\infty (t-\lambda(y)/4)^{q_0 - 1} d_{\SSSS f}(t) \, {\rm d}t \Big)^{r_0/q_0} \, {\rm d}y \\ &
	+ \int_0^\infty y^{r_\theta-r_1-1} \, \Big( \int_0^{\lambda(y)/4} t^{q_1 - 1} d_{\SSSS f}(t) \, {\rm d}t \Big)^{r_1/q_1} \, {\rm d}y
	\Big),
	\end{align*}
	with some constant $\widetilde{C}$ which may depend on $p, q_0, q_1, r_0, r_1, \theta$, and $\CC_\rightarrow$ but is independent of $f$ and the choice of a suitable function $\lambda = \lambda(y)$.
	
	It is worth noting here that the inequality above reduces the problem to estimating the expression of the form very similar to that appearing in \cite[(3.7)]{Z} (here $d_{\SSSS f}$, $\lambda/4$, $q_0$, $q_1$, $r_0$, $r_1$, and $r_\theta$ play the roles of $m$, $z$, $a_2$, $a_1$, $b_2$, $b_1$, and $b$, respectively). Thus, in order to obtain \eqref{A3}, we may repeat the remaining calculations without any further changes. We briefly sketch the rest of the proof for the sake of completeness.
	
	Denote by $P$ and $Q$ the two double integrals in the last estimate. Then
	\begin{displaymath}
	P^{q_0/r_0} = \sup_{\omega_0} \, \int_0^\infty y^{r_\theta - r_0 - 1} \, \int_{\lambda(y) / 4}^{\infty} (t - \lambda(y)/4)^{q_0-1} \, d_{\SSSS f}(t) \, {\rm d}t \, \omega_0(y) \, {\rm d}y
	\end{displaymath}
	and
	\begin{displaymath}
	Q^{q_1/r_1} = \sup_{\omega_1} \, \int_0^\infty y^{r_\theta - r_1 - 1} \, \int_{0}^{\lambda(y)/4} t^{q_1-1} \, d_{\SSSS f}(t) \, {\rm d}t \, \omega_1(y) \, {\rm d}y,
	\end{displaymath}
	where the functions $\omega_i \geq 0$ satisfy
	\begin{displaymath}
	\int_0^\infty y^{r_\theta-r_i-1} \omega_i^{(r_i/q_i)'}(y) \, {\rm d}y \leq 1,
	\end{displaymath} 
	with $(r_i/q_i)'$, the exponent conjugate to $r_i/q_i$. We set $\lambda(y) = 4 \|\SSSS f\|_{q_\theta}^{- \tau \xi} y^\xi$, where $\tau$ and $\xi$ will be determined later on. Now, by using H\"older's inequality, we obtain
	\begin{align*}
	& \int_0^\infty y^{r_\theta-r_0-1} \int_{\|\SSSS f\|_{q_\theta}^{- \tau \xi} y^\xi}^{\infty} (t - \|\SSSS f\|_{q_\theta}^{- \tau \xi} y^\xi)^{q_0-1} \, d_{\SSSS f}(t) \, {\rm d}t \, \omega_0(y) \, {\rm d}y \\
	& \quad \leq \int_0^\infty t^{q_0-1} \, d_{\SSSS f}(t) \int_{0}^{\| \SSSS f \|_{q_\theta}^{\tau} t^{\frac{1}{\xi}}} y^{r_\theta-r_0-1} \omega_0(y) \, {\rm d}y \, {\rm d}t \\
	& \quad \leq \int_0^\infty t^{q_0-1} \, d_{\SSSS f}(t) \Big( \int_{0}^{\| \SSSS f \|_{q_\theta}^{\tau} t^{\frac{1}{\xi}}} y^{r_\theta-r_0-1} {\rm d}y \Big)^\frac{q_0}{r_0} \, \Big( \int_{0}^{\| \SSSS f \|_{q_\theta}^{\tau} t^{\frac{1}{\xi}}} y^{r_\theta-r_0-1} \omega_0^{(\frac{r_0}{q_0})'}(y) {\rm d}y \Big)^\frac{1}{(r_0/q_0)'} {\rm d}t \\
	& \quad \leq (r_\theta-r_0)^{-q_0/r_0} \| \SSSS f \|_{q_\theta}^{\frac{(r_\theta-r_0) q_0 \tau}{r_0}} \int_0^\infty t^{q_0-1 + \frac{(r_\theta-r_0) q_0}{\xi r_0}} \, d_{\SSSS f}(t) \, {\rm d}t.
	\end{align*}
	Similarly, we obtain
	\begin{align*}
	& \int_0^\infty y^{r_\theta-r_1-1} \int_0^{\|\SSSS f\|_{q_\theta}^{- \tau \xi} y^\xi} t^{q_1-1} \, d_{\SSSS f}(t) \, {\rm d}t \, \omega_1(y) \, {\rm d}y \\
	& \qquad \leq (r_1-r_\theta)^{-q_1/r_1} \| \SSSS f \|_{q_\theta}^{\frac{(r_\theta-r_1) q_1 \tau}{r_1}} \int_0^\infty t^{q_1-1 + \frac{(r_\theta-r_1) q_1}{\xi r_1}} \, d_{\SSSS f}(t) \, {\rm d}t.
	\end{align*}
	Collecting these results we conclude that
	\begin{displaymath}
	\| \TTTT f \|_{r_\theta}^{r_\theta} \leq \widetilde{C}' \, \sum_{i=0}^1 \| \SSSS f \|_{q_\theta}^{(r_\theta-r_i) \tau} \Big( \int_0^\infty t^{q_i - 1 + \frac{(r_\theta - r_i) q_i}{r_i \xi}} d_{\SSSS f}(t) \, {\rm d}t \Big)^{r_i/q_i},
	\end{displaymath}
	for some $\widetilde{C}'$ independent of $f$. Choosing
	\begin{equation}\label{tau}
	\tau = \frac{q_\theta (r_1/q_1 - r_0/q_0) }{r_1 - r_0}, \quad \xi = \frac{q_\theta^{-1} (r_1^{-1} - r_\theta^{-1})}{r_\theta^{-1} (q_1^{-1} - q_\theta^{-1})},
	\end{equation}
	gives that both terms in the sum above equal $\| \SSSS f \|_{q_\theta}^{r_\theta}$. Thus \eqref{A3} holds with $C' = (2 \widetilde{C}')^{1/r_\theta}$, which completes the proof in the case $r_1 < \infty$.
	
	Finally, let us assume that $r_1 = \infty$. If $q_1 = \infty$, then the formulas in \eqref{tau} reduce to $\tau = 0$ and $\xi = 1$. We choose $\lambda(y) = cy$ for some sufficiently small constant $c > 0$. In fact, if $c < \CC_\rightarrow^{-1} \CC_\square^{-2} 2^{-1/p}$, then we have $d_{\TTTT f^\lambda_1}(y/2^{1/p}) = 0$, while $d_{\TTTT f^\lambda_0}(y/2^{1/p})$ may be estimated in the same way as it was done before. On the other hand, if $q_1 < \infty$, then the formulas in \eqref{tau} reduce to $\tau = q_\theta / q_1$ and $\xi = q_1 / (q_1 - q_\theta)$. Again, it can be shown that if $\lambda(y) = c' \|f\|_{q_\theta}^{-q_\theta / (q_1 - q_\theta)} y^{q_1/(q_1- q_\theta)}$, where $c'>0$ is sufficiently small (but independent of $f$ and $y$), then $d_{\TTTT f^\lambda_1}(y/2^{1/p}) = 0$ and $d_{\TTTT f^\lambda_0}(y/2^{1/p})$ may be estimated as before. This completes the proof in the case $r_1 = \infty$.

\end{document}